\theoremstyle{plain}
\newtheorem*{theo}{Theorem}
\newtheorem{prop}{Proposition}
\newtheorem{lem}{Lemma}
\newtheorem{cor}{Corollary}
\title[Invariant subspaces with no generator]{Invariant subspaces with no
generator \\ and a problem of H. Helson}
\author{ Jun-ichi Tanaka}
\dedicatory{Dedicated to the memory of Henry Helson}
\address{Department of Mathematics, School of Education, Waseda
University, Shinjuku, Tokyo 169-8050, Japan}
\email{jtanaka@waseda.jp}
\date{Received by editors November 21, 2009, and in revised form, January 17, 2013}
\keywords{Compact groups with ordered duals, Invariant
subspaces, Cocycles, Single generators}
\subjclass{Primary  43A17; Secondary 46J10, 46J15, 28D10.}
\thanks{Partially supported by NSF grant no.\,0649765}
\newcommand{\fO}{(\Omega, \{U_t\}_{t\in  \mathbb{R}})}
\newcommand{\fK}{(K, \{T_t\}_{t\in  \mathbb{R}})}
\newcommand{\fKT}{(K\times \mathbb{T}, \{S_t\}_{t\in  \mathbb{R}})}
\begin{document}
\maketitle

\begin{abstract}
In the almost-periodic context, the $H_0^2-$space cannot be
generated by one of its elements. Together with a cocycle argument,
this implies that there exist all kinds of invariant subspaces without
single generator, from which we answer some questions on
invariant subspace theory.

\end{abstract}

\bigskip
\section{Introduction}

The theory of invariant subspaces has been developed in the context
of compact abelian groups with ordered duals, which is a natural
generalization of such a theory on the unit circle $ \mathbb{T}$. Many
classical results extend to these cases, nevertheless, one also
meets new difficulties. The purpose of this paper is to resolve a
longstanding problem formulated by H. Helson in the 1950s.

Let $\Gamma$ be a countable dense subgroup of the real line
$ \mathbb{R}$, endowed with the discrete topology. Then the dual
group $K$ of $\Gamma$ is a compact abelian group that is metrizable.
For $\lambda$ in $\Gamma$, it is customary to denote by $\chi_\lambda$ the character
on $K$ defined by $\chi_\lambda(x)\,=\,x(\lambda)$. Let $\sigma$ be
the normalized Haar measure on $K$.  A function $\phi$ in
$L^1(\sigma)$ is {\itshape analytic} if its Fourier coefficients
\begin{equation}
a_\lambda(\phi)\;=\;\int_K\,\phi\,\overline{\chi_\lambda}\,d\sigma
\label{(1)}
\end{equation}
vanish for all negative $\lambda$ in $\Gamma$. The {\itshape Hardy
space} $H^p(\sigma), 1\le p \le \infty$, is defined to be the space
of all analytic functions in $L^p(\sigma)$. For technical reasons,
it is useful to define $H^p_0(\sigma)$ as the subspace of all
$\phi$ in $H^p(\sigma)$ with $a_0(\phi)\,=\,0$. A (weak*-, if
$p=\infty$) closed subspace $\mathfrak{M}$ of $L^p(\sigma)$ is
{\itshape invariant \/} if $\,\mathfrak{M}$ contains $\chi_\lambda
\mathfrak{M}$ for all positive $\lambda$ in $\Gamma$. When the
inclusion is strict, $\,\mathfrak{M}$ is said to be {\itshape simply
\/} invariant. Of course, both $H^p(\sigma)$ and $H^p_0(\sigma)$ are
simply invariant subspaces of $L^p(\sigma)$. If $\phi$ is in
$L^p(\sigma)$, and let $\mathfrak{M}[\phi]$ denotes the smallest
invariant subspace of $L^p(\sigma)$ containing $\phi$, then
$\phi$ is called a {\itshape single generator} of $\mathfrak{M}[\phi]$.
Recall that a function of modulus one is said to be {\itshape unitary}
and an analytic unitary function is called an {\itshape inner} function.
We say a function $\phi$ in $H^p(\sigma)$ is
{\itshape outer\/} if it satisfies that
$$
\log \mid a_0(\phi) \mid \;=\; \int_K \log \mid \phi \mid d\sigma
\;>\;-\infty \, .
$$
Let $1\le q \le p \le \infty$, and let $\mathfrak{M}$ be a simply
invariant subspace of $L^p(\sigma)$. It follows from the properties
of outer functions that $[\mathfrak{M}\cap L^\infty(\sigma)]_q\cap
L^p(\sigma)\,=\,\mathfrak{M}$, where $[\mathfrak{M}\cap
L^\infty(\sigma)]_q$ is the closure of $\mathfrak{M}\cap
L^\infty(\sigma)$ in $L^q(\sigma)$ (see \cite[Chapter V, Section
6]{G2} for details).
This fact assures that there is a one-to-one correspondence
between the invariant subspaces in $L^p(\sigma)$ and those in
$L^q(\sigma)$. Therefore, in dealing with invariant subspaces,
we may restrict our attention to the case of $p=2$, in
which Hilbert space theory works well.
It follows from Szeg\"o's theorem that $\phi$ is a single
generator of $H^2(\sigma)$ if and only if $\phi$ is outer in
$H^2(\sigma)$.
However, it has been unknown for a long time whether every simply
invariant subspace is singly generated or not. In the
literature this has come to be known as the \textit{single generator
problem} (refer to \cite[\S 5.4]{H1},
\cite[Remark, p.\,158]{G1} and \cite[p.\,138 and p.\,177]{G2}).
The difficulty seems
to center on the case of invariant subspace $H^2_0(\sigma)$.
In \cite[p.\,183]{HL}, it is raised in an equivalent form
in connection with stochastic processes.

Our objective in this note is to show a negative answer to this
problem in the almost periodic settings:

\begin{theo}
\label{theo}
 \; The invariant subspace $H^2_0(\sigma)$ cannot be generated by one of its elements.
\end{theo}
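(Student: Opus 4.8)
\bigskip

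The plan is to argue by contradiction: supposing that some $\phi$ in $H^2_0(\sigma)$ satisfies $\mathfrak{M}[\phi]=H^2_0(\sigma)$, I would factor $\phi$ through an outer function and show that the resulting inner factor must vanish identically. Two degeneracies are cleared first. If $\phi$ vanished on a set of positive $\sigma$-measure, so would every $\chi_\lambda\phi$ with $\lambda\ge 0$, hence every element of $\mathfrak{M}[\phi]$, whereas $\chi_\lambda\in H^2_0(\sigma)$ has modulus one; so $\phi\ne 0$ a.e. Moreover $\log|\phi|\in L^1(\sigma)$: otherwise the generalized Szeg\"o theorem for the weak-$*$ Dirichlet algebra $H^\infty(\sigma)$ says that the analytic trigonometric polynomials are dense in $L^2(|\phi|^2\,d\sigma)$; their image under the unitary map $f\mapsto f\phi$ of $L^2(|\phi|^2\,d\sigma)$ onto $L^2(\sigma)$ is then dense in $L^2(\sigma)$, but that image has closure $\mathfrak{M}[\phi]$, which is contained in $H^2(\sigma)$ (each $\chi_\lambda\phi$, $\lambda\ge 0$, being analytic) and hence is a proper subspace of $L^2(\sigma)$, a contradiction. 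There is no circularity here, since the Szeg\"o dichotomy is a statement about weighted $L^2$-spaces.

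Now choose an outer $h\in H^2(\sigma)$ with $|h|=|\phi|$ a.e.\ (standard; see \cite{G2}), and set $u=\phi/h$, so that $|u|=1$ a.e.\ and $\phi=uh$. Multiplication by $u$ is a unitary operator on $L^2(\sigma)$ commuting with multiplication by each $\chi_\lambda$, so $\mathfrak{M}[\phi]=u\,\mathfrak{M}[h]$; and since $h$ is outer, the Szeg\"o characterization recalled in the introduction (an element of $H^2(\sigma)$ generates $H^2(\sigma)$ iff it is outer) gives $\mathfrak{M}[h]=H^2(\sigma)$. Hence $uH^2(\sigma)=\mathfrak{M}[\phi]=H^2_0(\sigma)$. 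In particular $u=u\cdot 1$ lies in $uH^2(\sigma)=H^2_0(\sigma)$, so $u$ is in fact a bona fide inner function with $a_0(u)=0$; that is, $a_\nu(u)=0$ for every $\nu\le 0$ in $\Gamma$.

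The contradiction is then extracted from the absence of a least positive element of $\Gamma$. Fix $\lambda>0$ in $\Gamma$; then $\chi_\lambda\in H^2_0(\sigma)=uH^2(\sigma)$, so $\overline{u}\,\chi_\lambda\in H^2(\sigma)$. A direct computation of Fourier coefficients gives $a_\eta(\overline{u}\,\chi_\lambda)=\overline{a_{\lambda-\eta}(u)}$, so the analyticity of $\overline{u}\,\chi_\lambda$ forces $a_\nu(u)=0$ for all $\nu>\lambda$ in $\Gamma$. Since $\lambda>0$ was arbitrary and $\inf\{\lambda\in\Gamma:\lambda>0\}=0$ by density of $\Gamma$ in $\mathbb{R}$, we obtain $a_\nu(u)=0$ for every $\nu>0$; combined with the vanishing for $\nu\le 0$, every Fourier coefficient of $u$ is zero, so $u=0$ a.e., contradicting $|u|=1$ a.e. Therefore $H^2_0(\sigma)$ admits no single generator.

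I expect the only delicate point to be the middle paragraph — the reduction $\mathfrak{M}[\phi]=uH^2(\sigma)$ — which rests on the $H^p(\sigma)$-machinery (existence of outer functions with prescribed modulus, Szeg\"o's theorem, and the Szeg\"o density dichotomy); once that is granted the argument is short. It is worth stressing that the density of $\Gamma$ is used only at the very end: on the circle the same chain of reasoning would merely reproduce the harmless identity $H^2_0=\chi_1H^2$, whereas here ``$uH^2(\sigma)=H^2_0(\sigma)$'' is self-contradictory precisely because there is no smallest positive character.
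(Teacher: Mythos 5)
There is a genuine gap, and it sits exactly where the whole difficulty of the theorem lies: your claim that $\log|\phi|$ must belong to $L^1(\sigma)$. The justification you give --- that $\log|\phi|\notin L^1(\sigma)$ would, by the generalized Szeg\"o theorem, make the analytic trigonometric polynomials dense in $L^2(|\phi|^2\,d\sigma)$ and hence force $\mathfrak{M}[\phi]=L^2(\sigma)$ --- is false when $\Gamma$ is a dense subgroup of $\mathbb{R}$. What Szeg\"o's theorem for the weak-$*$ Dirichlet algebra $H^\infty(\sigma)$ actually yields from $\int_K\log|\phi|\,d\sigma=-\infty$ is that the distance from $\phi$ to the closed span of $\{\chi_\lambda\phi:\lambda>0\}$ is zero, i.e.\ $\mathfrak{M}[\phi]=\mathfrak{M}[\phi]_-$; the bootstrapping that upgrades ``$1$ lies in the $L^2(w)$-closure of $H^\infty_0$'' to ``the analytic polynomials are dense in $L^2(w)$'' relies on the existence of a smallest positive character (this is the Beurling--Wiener dichotomy on $\mathbb{T}$) and breaks down for dense $\Gamma$. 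Indeed the paper recalls, immediately after \eqref{(3)}, that there exist $\phi$ with $\log|\phi|\notin L^1(\sigma)$ for which $\mathfrak{M}[\phi]$ is simply invariant, hence a proper subspace: the correct criterion is the orbitwise condition \eqref{(3)}, which is strictly weaker than $\log|\phi|\in L^1(\sigma)$. Such subspaces, carrying nontrivial cocycles, are the heart of Helson's theory, and the case you dismiss is precisely the one that must be analyzed.

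What survives of your argument is its second half: if $\log|\phi|\in L^1(\sigma)$, then $\mathfrak{M}[\phi]=uH^2(\sigma)$ for a unitary $u$, and your Fourier-coefficient computation showing $uH^2(\sigma)\neq H^2_0(\sigma)$ is correct (equivalently, $(H^2_0)_+=H^2\neq H^2_0=(H^2_0)_-$, whereas $uH^2(\sigma)$ equals its own plus-version). But this is exactly the easy case already disposed of in the opening lines of the proof of Lemma \ref{lem1}. Note also that in the excluded case your middle paragraph cannot even begin: an outer $h$ with $|h|=|\phi|$ exists only when $\log|\phi|\in L^1(\sigma)$. The actual proof must rule out a generator with $\log|\phi|\notin L^1(\sigma)$ satisfying \eqref{(3)}, and the paper does this by an entirely different and much longer route: comparing the group $\Gamma$ generated by the frequencies of $\phi$ with the group $\Lambda$ generated by the frequencies of $|\phi|$, showing (Lemmas \ref{lem5}--\ref{lem8}) that any bounded generator of $H^2_0(\sigma)$ forces $\Gamma$ to split as $\Lambda\oplus\mathbb{Z}\alpha$ with $\alpha$ independent of $\Lambda$, and then manufacturing from $\phi$ the new generator $(1+\beta\chi_\alpha)^2\phi$ for which the two groups coincide, a contradiction.
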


To the best of author's knowledge, $H^2_0(\sigma)$ is the first known
example of invariant subspace which cannot be singly generated.
On the other hand, by \cite[\S 5.3, Theorem 33]{H1}, it was shown that
every invariant subspace is generated by
two of its elements. In more general setting, we can artificially make
$H_0^2$-spaces to have a single generator.

For each $t$ in $ \mathbb{R}$, let us denote by $e_t$ the element of $K$
defined by $e_t(\lambda)\,=\,e^{i\lambda t}$ for $\lambda$ in
$\Gamma$. The map sending $t$ to $e_t$ embeds $ \mathbb{R}$
continuously onto a dense subgroup of $K$. Define a one-parameter
group $\{T_t\}_{t\in  \mathbb{R}}$ of homeomorphisms on $K$ by
\begin{equation}
T_t\,x\;=\;x + e_t\,, \qquad x \in K.
\label{(2)}
\end{equation}
Then the pair $\fK$ is a strictly ergodic flow, for which
$\sigma$ is the unique invariant probability measure. The flow $\fK$
is called an \textit{almost periodic flow}, because if $\phi$ is
continuous on $K$, then $t\rightarrow \phi(x+e_t)$ is a uniformly
almost periodic function with exponents in $\Gamma$.
Let $H^\infty(dt/\pi(1+t^2))$ be the space of all boundary functions
of bounded analytic functions in the upper half-plane $ \mathcal{H}$,
and let $H^p(dt/\pi(1+t^2)), 1\le p < \infty,$ be the
closure of $H^\infty(dt/\pi(1+t^2))$ in $L^p(dt/\pi(1+t^2))$.
For a function $u(x,t)$ on $K\times \mathbb{ R}$, the assertion
``\,$t \rightarrow u(x,t)$ for $\sigma-a.e. \,x$ in $K$\,''
is sometimes abbreviated to ``\textit{almost every} $t \rightarrow u(x,t)$ ''.
Then $\phi$ in $L^p(\sigma)$ lies in $H^p(\sigma)$
if and only if almost every $t \rightarrow \phi(x+e_t)$ lies in
$H^p(dt/\pi(1+t^2))$.
This fact enables us to define Hardy spaces
on every ergodic flow (see the end of the next section).

Let $\mathfrak{M}$ be a simply invariant subspace of $L^2(\sigma)$.
Set $\mathfrak{M}_\lambda\,=\,\chi_\lambda \mathfrak{M}$
for each $\lambda$ in $\Gamma$. Define
$$
\mathfrak{M}_+\;=\;\bigwedge_{\lambda<0}\;\mathfrak{M}_\lambda
\qquad \text{and} \qquad \mathfrak{M}_-\;=\;
\bigvee_{\lambda>0}\;\mathfrak{M}_\lambda.
$$
Since these spaces are at most one dimension apart,  $\mathfrak{M}$
coincides with either or both its versions $\mathfrak{M}_+$ and
$\mathfrak{M}_-$. When $\/ \mathfrak{M}\,=\,\mathfrak{M}_+$,
$\mathfrak{M}$ is said to be {\itshape normalized}.  For $\phi$ in
$L^2(\sigma)$, the subspace $\mathfrak{M}[\phi]$ is simply invariant
if and only if
\begin{equation}
\int_{-\infty}^\infty \, \log \vert \phi(x+e_t)\vert \,
\frac{dt}{1+t^2}\; > \; -\infty\,,\qquad  \sigma-a.e.\; x \in K,
\label{(3)}
\end{equation}
(see \cite[\S 3.3, Theorem 22 ]{H1}). It is well-known that there is a
function $\phi$ in $L^2(\sigma)$ satisfying the inequality \eqref{(3)},
while $\log\vert\phi\vert$ does not belong to $L^1(\sigma)$. Our
Theorem asserts that any such function $\phi$ must satisfy
$\mathfrak{M}[\phi]_+\,=\,\mathfrak{M}[\phi]_-$.

A unitary Borel function $A(x,t)$ on $K \times \mathbb{ R}$ is said
to be a {\itshape cocycle} on $K$ if $A(x,t)$ satisfies the
\textit{cocycle identity}
$$
A(x,t+s) \; = \; A(x,t)\cdot A(x + e_t,s)\,, \qquad (x,s,t) \in
K\times  \mathbb{R} \times  \mathbb{R}\,.
$$
We identify two cocycles which differ only on a set of $d\sigma
\times dt-$measure zero in $K\times  \mathbb{R}$. A one-to-one
correspondence is established between normalized invariant subspaces
and cocycles (as discussed in \cite[\S 2.3]{H1}). More precisely,
let $\/\mathfrak{M}$ be a simply invariant subspace
of $L^2(\sigma)$ with cocycle $A(x,t)$. Then a function
$\phi$ in $L^2(\sigma)$
lies in $\mathfrak{M}_+$ if and
only if almost every $t \rightarrow A(x,t)\phi(x+e_t)$ lies in
$H^2(dt/\pi(1+t^2))$ (see
\cite[\S 3.2]{H1}). It is easy to see that
$\mathfrak{M}_+\,\not=\,\mathfrak{M}_-$ if and only if
$\mathfrak{M}_+\,=\, qH^2(\sigma)$ for some unitary function $q$
on $K.$ Then the cocycle of $\/\mathfrak{M}$ has
the form $\,q(x) \cdot \overline{q(x+e_t)},$ which is called
a {\itshape coboundary}. If a cocycle is a coboundary
multiplied by $\exp (i \alpha t)$ for some
$\alpha$ in $ \mathbb{R}$, then such a cocycle is said to be
\textit{trivial}. A trivial cocycle $\exp (i \alpha t)$ is not a
coboundary only if $\alpha$ lies in $ \mathbb{R}\setminus \Gamma$.

We already know from \cite{H3} and \cite{MS} that some singly generated
subspaces have nontrivial cocycles, but we can strengthen this fact
by noting the following:

\begin{cor}
\label{cor1}
Let $\,\mathfrak{M}$ be a simply invariant subspace of $L^2(\sigma)$.
If the cocycle of $\/\mathfrak{M}$ is trivial, then
$\mathfrak{M}_-$ has no single generator.
In other words, if $\,\mathfrak{M}_-$ is singly generated, then
the cocycle of $\,\mathfrak{M}$ is always nontrivial,
so that $\; \mathfrak{M}_+ \,=\,\mathfrak{M}_-$.
\end{cor}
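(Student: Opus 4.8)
The plan is to reduce the statement to the Theorem by exploiting the cocycle description of $\mathfrak{M}_+$ together with the observation that multiplying a cocycle by a unitary coboundary corresponds to multiplying all functions in the associated subspace by a fixed unitary function on $K$. Concretely, suppose the cocycle $A(x,t)$ of $\mathfrak{M}$ is trivial, say $A(x,t)=e^{i\alpha t}\,q(x)\overline{q(x+e_t)}$ for some $\alpha\in\mathbb{R}$ and some unitary function $q$ on $K$. First I would treat the genuinely coboundary case $\alpha\in\Gamma$: here $e^{i\alpha t}=\chi_\alpha(x)\overline{\chi_\alpha(x+e_t)}$, so after absorbing $\chi_\alpha$ into $q$ we may assume $A(x,t)=q(x)\overline{q(x+e_t)}$, and the cocycle characterization of membership in $\mathfrak{M}_+$ says precisely that $\phi\in\mathfrak{M}_+$ iff almost every $t\rightarrow q(x)\overline{q(x+e_t)}\phi(x+e_t)$ lies in $H^2(dt/\pi(1+t^2))$, i.e. iff $\overline{q}\phi\in H^2(\sigma)$ (using that the cocycle of $H^2(\sigma)$ is $\overline{q(x+e_t)}/\overline{q(x)}$ up to the coboundary normalization — more cleanly, one checks the cocycle of $qH^2(\sigma)$ is exactly $q(x)\overline{q(x+e_t)}$). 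Hence $\mathfrak{M}_+=qH^2(\sigma)$, and consequently $\mathfrak{M}_-=\bigvee_{\lambda>0}\chi_\lambda\mathfrak{M}_+ = q\,\overline{\bigcap_{\lambda>0}\chi_{-\lambda}H^2(\sigma)}^{\perp}$ — more simply, $\mathfrak{M}_- = q\bigvee_{\lambda>0}\chi_\lambda H^2(\sigma) = q\,H^2_0(\sigma)$, since $\bigvee_{\lambda>0}\chi_\lambda H^2(\sigma)$ is the closure of the span of $\{\chi_\mu:\mu>0\}$, which is $H^2_0(\sigma)$.

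Now comes the crux: if $\mathfrak{M}_-=qH^2_0(\sigma)$ had a single generator $\psi$, then since $\mathfrak{M}[\psi]$ is the smallest invariant subspace containing $\psi$ and multiplication by the fixed unitary $q$ is an isometric bijection of $L^2(\sigma)$ commuting with multiplication by every $\chi_\lambda$, the function $\overline{q}\psi$ would be a single generator of $H^2_0(\sigma)$. This contradicts the Theorem. Therefore $\mathfrak{M}_-$ has no single generator. For the remaining case $\alpha\in\mathbb{R}\setminus\Gamma$, the trivial cocycle $e^{i\alpha t}q(x)\overline{q(x+e_t)}$ is \emph{not} a coboundary, so by the criterion recalled in the excerpt ($\mathfrak{M}_+\neq\mathfrak{M}_-$ iff the cocycle is a coboundary) we get $\mathfrak{M}_+=\mathfrak{M}_-$; but a normalized space whose cocycle is trivial is still, after the change of variable $t\mapsto$ (rotation by $e^{i\alpha t}$), unitarily equivalent to $qH^2_0(\sigma)$ over the flow — so the same reduction applies and shows $\mathfrak{M}_-$ has no single generator. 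In either case we conclude: triviality of the cocycle forces $\mathfrak{M}_-$ to be generator-free.

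The second (contrapositive) sentence is then immediate: if $\mathfrak{M}_-$ \emph{is} singly generated, the cocycle of $\mathfrak{M}$ cannot be trivial; and a nontrivial cocycle is in particular not a coboundary, so by the stated equivalence $\mathfrak{M}_+=\mathfrak{M}_-$.

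I expect the main obstacle to be bookkeeping around the case $\alpha\notin\Gamma$ and making the phrase ``unitarily equivalent over the flow'' precise — specifically, verifying that the map intertwining $\mathfrak{M}_-$ with $qH^2_0(\sigma)$ really does carry single generators to single generators when a genuine $e^{i\alpha t}$ twist is present (one must check it respects the lattice of invariant subspaces, which it does because $e^{i\alpha t}$ commutes with all the $T_t$-translates and with multiplication by $\chi_\lambda$ up to a scalar). Everything else is a direct transcription of the cocycle dictionary from \cite[\S\S 2.3, 3.2]{H1} combined with the Theorem.
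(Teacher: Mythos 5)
The coboundary half of your argument is fine: when the trivial cocycle is actually a coboundary (after absorbing $\chi_\alpha$ into $q$ in the case $\alpha\in\Gamma$), you get $\mathfrak{M}_-=qH^2_0(\sigma)$, multiplication by $\overline{q}$ is a unitary of $L^2(\sigma)$ commuting with every $\chi_\lambda$, and a generator of $\mathfrak{M}_-$ would give a generator of $H^2_0(\sigma)$, contradicting the Theorem. This is exactly the easy reduction. Your closing contrapositive paragraph is also correct.

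The gap is in the case $\alpha\notin\Gamma$, and it is precisely the step you flag as ``unitarily equivalent over the flow.'' There is no unitary function $u$ on $K$ with $u(x)\overline{u(x+e_t)}=e^{i\alpha t}$ when $\alpha\notin\Gamma$ --- that is exactly the statement that $e^{i\alpha t}$ fails to be a coboundary --- so there is no multiplication operator on $L^2(\sigma)$ carrying $\mathfrak{M}_-$ onto $qH^2_0(\sigma)$ and respecting the lattice of invariant subspaces. The paper has to do genuine work here, split into two subcases that your dichotomy ($\alpha\in\Gamma$ versus $\alpha\notin\Gamma$) does not match. (i) If $\ell\alpha\in\Gamma$ for some positive integer $\ell$, one uses a power trick: with $\phi$ a bounded generator of $\mathfrak{M}_-$, the function $(\chi_\alpha\phi)^\ell=\chi_{\ell\alpha}\phi^\ell$ is a legitimate element of $L^\infty(\sigma)$ and, by Lemma \ref{lem1}, a single generator of $H^2_0(\sigma)$, contradicting the Theorem. (ii) If $n\alpha\notin\Gamma$ for all $n\neq 0$, one enlarges the group to $\Gamma_1=\Gamma+\mathbb{Z}\alpha$, identifies the almost periodic flow on $K_1=\widehat{\Gamma_1}$ with the skew product $K\times\mathbb{T}$ induced by $e^{i\alpha t}$ (this identification needs $\varrho(\lambda,n)=\lambda+n\alpha$ to be an isomorphism, hence needs $\mathbb{Z}\alpha\cap\Gamma=\{0\}$, which also gives ergodicity of $d\sigma\times d\theta/2\pi$), and checks that $\phi_1(x,e^{i\theta})=\phi(x)e^{i\theta}$ would generate $H^2_0(\mu)$ on the \emph{new} group --- i.e.\ the Theorem is invoked for $\Gamma_1$, not for $\Gamma$. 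Your sketch supplies neither mechanism; in particular the intermediate case $\alpha\notin\Gamma$ but $\ell\alpha\in\Gamma$ for some $\ell\ge 2$ falls through entirely, since there the skew-product measure is not ergodic, $\varrho$ is not injective, and there is still no coboundary to exploit. To repair the proof you should replace the ``unitary equivalence'' step by the explicit two-case analysis above.
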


A cocycle with values in $\{-1, 1\}$ is called a
\textit{real} cocycle. It follows from \cite{HP} that there
exist real cocycles which are nontrivial.

\begin{cor}
\label{cor2}
Let $\,\mathfrak{M}$ be a simply invariant subspace of $L^2(\sigma)$
with real cocycle. Then $\mathfrak{M}_-$ has no single generator.
\end{cor}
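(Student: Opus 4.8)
The plan is to split on whether the cocycle $A(x,t)$ of $\mathfrak{M}$ is trivial. If it is, then Corollary \ref{cor1} applies verbatim and $\mathfrak{M}_-$ has no single generator. So assume $A$ is \emph{nontrivial}; since every coboundary is trivial, $A$ is then not a coboundary, and hence $\mathfrak{M}_+=\mathfrak{M}_-=\mathfrak{M}$. Thus $\mathfrak{M}$ is normalized with cocycle $A$, and it is enough to prove that $\mathfrak{M}$ itself has no single generator. Suppose, toward a contradiction, that $\phi$ generates $\mathfrak{M}$.

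First I would use the way cocycles are attached to singly generated subspaces in \cite[\S\,2.3, \S\,3.2]{H1}: because $\mathfrak{M}=\mathfrak{M}[\phi]$ is normalized with cocycle $A$, the function $t\mapsto A(x,t)\,\phi(x+e_t)$ is not merely in $H^2(dt/\pi(1+t^2))$ but is \emph{outer} there, for $\sigma$-a.e.\ $x$. Now exploit that $A$ is real, so $A(x,t)^2=1$ for a.e.\ $(x,t)$: squaring the outer function $A(x,t)\phi(x+e_t)$ shows that $\phi(x+e_t)^2$ is outer in $H^1(dt/\pi(1+t^2))$ for a.e.\ $x$, and translating this back to $K$ says that $\phi^2$ is an outer function in $H^1(\sigma)$. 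Hence there is an outer $h\in H^2(\sigma)$ with $h^2=\phi^2$, and therefore
$$
\phi \;=\; \varepsilon\, h , \qquad \varepsilon\colon K\to\{-1,1\}\ \text{Borel}.
$$

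Next I would feed $\phi=\varepsilon h$ back into the outerness of $A(x,t)\phi(x+e_t)$. For a.e.\ $x$ the function $h(x+e_t)$ is outer (since $h$ is outer on $K$), while $A(x,t)\,\varepsilon(x+e_t)\,h(x+e_t)=A(x,t)\phi(x+e_t)$ is outer too; as two outer functions of the same modulus differ by a unimodular constant and $|A(x,t)\varepsilon(x+e_t)|=1$, we get $A(x,t)\,\varepsilon(x+e_t)=c(x)$ for a.e.\ $t$, with $c(x)\in\{-1,1\}$. Thus $A(x,t)=c(x)\,\varepsilon(x+e_t)$ a.e.; inserting this into the cocycle identity $A(x,t+s)=A(x,t)A(x+e_t,s)$ and cancelling quickly forces $c=\varepsilon$, so
$$
A(x,t)\;=\;\varepsilon(x)\,\overline{\varepsilon(x+e_t)},
$$
i.e.\ $A$ is a coboundary. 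This contradicts the nontriviality of $A$. Therefore $\mathfrak{M}$ — and with it $\mathfrak{M}_-$ — has no single generator, which together with the trivial case handled by Corollary \ref{cor1} finishes the proof.

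The step I expect to be the main obstacle is the passage in the second paragraph: one must argue carefully that a single generator of a normalized invariant subspace is carried by its cocycle onto an \emph{outer} boundary function, and that ``outer in $H^p(\sigma)$'' is equivalent to ``outer along $\sigma$-a.e.\ orbit of the flow $\fK$'', so that the squaring trick and the factorization $\phi=\varepsilon h$ are legitimate. Once those facts are in place, the remainder is a short manipulation of the cocycle identity.
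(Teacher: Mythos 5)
Your reduction and your squaring trick are exactly the paper's opening moves (Lemma~\ref{lem1} gives that $t\mapsto C(x,t)\phi(x+e_t)$ is outer for a.e.\ $x$, and $C^2\equiv 1$ makes $t\mapsto\phi^2(x+e_t)$ outer), but the step you yourself flagged as the main obstacle is in fact false, and it is the crux. From ``$t\mapsto\phi^2(x+e_t)$ is outer in $H^1(dt/\pi(1+t^2))$ for a.e.\ $x$'' you cannot conclude that $\phi^2$ is outer in $H^1(\sigma)$: outerness on $K$ means $\log|a_0(\phi^2)|=\int_K\log|\phi^2|\,d\sigma>-\infty$, whereas Lemma~\ref{lem1} forces any single generator $\phi$ of a simply invariant $\mathfrak{M}_-$ to have $\log|\phi|\notin L^1(\sigma)$, i.e.\ (for bounded $\phi$) $\int_K\log|\phi|\,d\sigma=-\infty$. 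The equivalence ``outer on $K$ $\Leftrightarrow$ outer along a.e.\ orbit'' holds only in conjunction with $a_0\neq 0$, and here $a_0(\phi^2)=0$. Consequently there is no outer $h\in H^2(\sigma)$ with $|h|=|\phi|$, the factorization $\phi=\varepsilon h$ does not exist, and the rest of your argument (deducing that $A$ is a coboundary and contradicting nontriviality) collapses. Indeed, if your step were valid, the main Theorem itself would be nearly immediate and essentially content-free: the whole point of the paper is that there exist bounded functions whose orbit restrictions are all outer yet which are not outer on $K$.

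The correct conclusion to draw after squaring is different and is exactly what the paper does: $\phi^2$ is a bounded function with $a_0(\phi^2)=0$ (by the last part of Lemma~\ref{lem1}, since $\log|\phi^2|\notin L^1(\sigma)$) whose a.e.\ orbit functions are outer, hence by Lemma~\ref{lem1} it is a single generator of $H^2_0(\sigma)$ --- contradicting the main Theorem. So the contradiction must be taken against the Theorem, not against the nontriviality of the cocycle; your attempt to bypass the Theorem cannot work, because ruling out such ``locally outer, globally non-outer'' generators \emph{is} the Theorem. A minor further point: your initial case split on triviality of the cocycle is unnecessary, since the squaring argument applies uniformly to any real cocycle.
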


A cocycle $A(x,t)$ is said to be \textit{analytic} if almost every
$t \rightarrow A(x,t)$ lies in $H^\infty(dt/\pi(1+t^2))$. Then
a normalized invariant subspace with analytic cocycle contains
always $H^2(\sigma)$. We say that an analytic cocycle $A(x,t)$ is
a \textit{Blaschke} or a \textit{singular} cocycle, if almost every
$t\rightarrow A(x,t)$ is an inner function of that type in
$H^\infty(dt/\pi(1+t^2))$. Two cocycles are called
\textit{cohomologous} if one is a coboundary times the
other. It is known that every cocycle is cohomologous to a
Blaschke cocycle in some restricted class
(see \cite[\S 4.6, Theorem 26]{H1} and \cite{T1}). This fact makes
Blaschke cocycles so important for the subject.
Using our Theorem, we may answer some questions on analytic cocycles:

\begin{cor}
\label{cor3}
In the class of analytic cocycles, the following properties hold:
\begin{enumerate}
\renewcommand{\labelenumi}{(\alph{enumi})}
\item There is a Blaschke cocycle not being cohomologous to any
singular cocycle.
\item There is a Blaschke cocycle not having exactly the same
zeros as any function in $H^2(\sigma)$.
\end{enumerate}
\end{cor}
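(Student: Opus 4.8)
The plan is to deduce both statements from the Theorem --- through Corollaries \ref{cor1} and \ref{cor2} --- together with the cited fact that every cocycle is cohomologous to a Blaschke cocycle, using the following dictionary between properties of a Blaschke cocycle $B$ and single generation. "\,$B$ has exactly the same zeros as some $\phi\in H^2(\sigma)$\,'' should amount to $B$ being, up to a fiberwise unimodular normalization, the Blaschke factor in the canonical factorization $\phi(x+e_t)=B(x,t)S(x,t)O(x,t)$ on almost every fiber; and "\,$B$ is cohomologous to a singular cocycle\,'' should amount, after cancelling the common Blaschke part through the connecting coboundary, to $B$ being the Blaschke factor of an inner function belonging to $H^\infty(\sigma)$. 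In either case the existence of such a realizing function would, via the cocycle correspondence, convert a normalized invariant subspace that Corollary \ref{cor1} or \ref{cor2} forbids from being singly generated into a singly generated one. So the proof is by contradiction: assuming every Blaschke cocycle is "nice'' in the relevant sense, one manufactures a single generator for a subspace already known to have none.

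Concretely, for (a) I would start from a nontrivial real cocycle $R$ (which exists by \cite{HP}) and pass to a cohomologous Blaschke cocycle $B_0$; by Corollary \ref{cor2}, since $R$ is nontrivial we have $\mathfrak M_{R,+}=\mathfrak M_{R,-}=\mathfrak M_R$, so the normalized subspace $\mathfrak M_{B_0}=q\mathfrak M_R$ is not singly generated. If $B_0$ were cohomologous to a singular cocycle $S$, then $\mathfrak M_S$ would be cohomologous to $\mathfrak M_{B_0}$, hence $q'\mathfrak M_S$ would fail to be singly generated for every unitary $q'$, and I would derive a contradiction by exhibiting, from the zero-freeness of $S$ together with the one-dimensional gap between $\mathfrak M_+$ and $\mathfrak M_-$, a function that generates the offending subspace. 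For (b) I would likewise take $\phi_0\in L^2(\sigma)$ satisfying \eqref{(3)} with $\log|\phi_0|\notin L^1(\sigma)$; by the Theorem $\mathfrak M[\phi_0]_+=\mathfrak M[\phi_0]_-$, and by Corollary \ref{cor1} its cocycle $A_0$ is nontrivial (it cannot be a coboundary, and if it were trivial then Corollary \ref{cor1} would be contradicted, since $\mathfrak M[\phi_0]$ is singly generated by $\phi_0$). Thus a Blaschke cocycle $B_0$ cohomologous to $A_0$ is nontrivial while $\mathfrak M_{B_0}=q\mathfrak M[\phi_0]$ is singly generated. Supposing $B_0$ has exactly the zeros of some $g\in H^2(\sigma)$, I would transport the singular and outer factors of $g(x+e_\cdot)/B_0(x,\cdot)$ coherently across the flow --- which is forced because $B_0$ is a genuine cocycle and the canonical factorization is unique, hence measurable and flow-compatible --- to produce a unitary function on $K$ realizing $A_0$ as a coboundary, contradicting its nontriviality.

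The main obstacle, in both parts, is precisely this passage from fiberwise analytic data (the canonical factorizations of $g(x+e_\cdot)$, of $\phi_0(x+e_\cdot)$, or of $S(x,\cdot)$) to an honest function on $K$ and an honest cocycle identity: when one replaces $x$ by $x+e_{t_0}$ the outer and singular factors acquire unimodular "constants'' that must be reconciled with the cocycle relation of $B_0$, and the obstruction to doing so is the phenomenon --- exemplified by $e^{-it}$ --- of bounded measurable functions on $\mathbb R$ which extend analytically to $\mathcal H$ yet do not lie in $H^\infty(dt/\pi(1+t^2))$ because the extension is unbounded. This is the same difficulty that underlies the single generator problem itself, which is why Corollary \ref{cor3} is extracted from the Theorem by the same circle of ideas --- a Szeg\"o-type extremal argument exploiting the codimension-one gap --- rather than proved independently.
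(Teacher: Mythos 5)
Your overall architecture --- derive both statements from the Theorem by exhibiting a normalized invariant subspace that cannot be singly generated and reading off a property of the associated Blaschke cocycle --- matches the paper's, and your starting points (a nontrivial real cocycle from \cite{HP} for (a), a singly generated $\mathfrak{M}[\phi_0]$ with nontrivial cocycle for (b)) are workable. But each part breaks at its decisive step. In (a), after reducing to the claim that a subspace with a (nontrivial) singular cocycle must be singly generated, you propose to produce the generator ``from the zero-freeness of $S$ together with the one-dimensional gap between $\mathfrak{M}_+$ and $\mathfrak{M}_-$.'' Neither tool exists in the relevant situation: for a nontrivial cocycle there is no gap ($\mathfrak{M}_+=\mathfrak{M}_-$ precisely because the cocycle is not a coboundary), and zero-freeness alone cannot yield a generator --- the trivial cocycle $e^{i\alpha t}$ is singular and zero-free, yet Corollary~\ref{cor1} says its $\mathfrak{M}_-$ has none. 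The implication ``nontrivial singular cocycle $\Rightarrow$ singly generated'' is exactly the main theorem of \cite{HT}, which the paper imports as a black box (together with Lemma~\ref{lem9}, which converts one singly generated subspace with nontrivial cocycle into a non--singly-generated one with the conjugate cocycle). Without that citation or a genuine substitute, (a) is not proved.

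In (b) the gap is worse: your key implication, that $B_0$ having exactly the zeros of some $g\in H^2(\sigma)$ forces $A_0$ to be a coboundary, is not only unargued (the ``coherent transport of singular and outer factors across the flow'' is precisely the step you yourself flag as the main obstacle) but is the wrong target. What the hypothesis actually yields, via the factorization $(h g)(x+e_t)=B_0(x,t)\cdot S(x,t)h(x+e_t)\cdot\overline{B_0(x,t)}\,\overline{S(x,t)}g(x+e_t)$ with $h$ the function supplied by \cite{HT} making $t\mapsto S(x,t)h(x+e_t)$ outer, is that the subspace $\mathfrak{N}$ with cocycle $\overline{B_0(x,t)}$ is singly generated by $hg$; the contradiction must therefore come from having chosen $B_0$ so that $\mathfrak{N}$ is \emph{not} singly generated, which is exactly what Lemma~\ref{lem9} provides and what you never invoke. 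Note also that if your principle were true it would make (b) essentially independent of the Theorem --- any non-coboundary Blaschke cocycle (these exist already from \cite{H3}, \cite{MS}) would qualify --- whereas the generators constructed there have nonconstant Blaschke parts arising from honest functions in $H^2(\sigma)$ with non-coboundary cocycles, so ``same zeros as an $H^2(\sigma)$ function'' does not imply ``coboundary.'' To repair the proof you need Lemma~\ref{lem9} and the two facts from \cite{HT} (the characterization of single generation by cohomology to a singular cocycle, and the existence of $h$ with $t\mapsto S(x,t)h(x+e_t)$ outer); with those in hand your choices of starting cocycles do lead to correct proofs.
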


It would be helpful to understand the basic idea behind the
proof of our Theorem. On the one hand, we claim that if $\phi$
is a single generator of $H^2_0(\sigma)$, then $\phi$ must
have a very special form. Assume
that $\Gamma$ is the smallest group determined by the nonzero
Fourier coefficients of $\phi$ (see below for details).
Similarly, let $\Lambda$ be the smallest group determined by the
nonzero coefficients of $\vert \phi \vert$. Since $\Lambda$ is a subgroup of
$\Gamma$, the dual group of $\Lambda$ is represented as $K/H$,
where $H$ is the annihilator of $\Lambda$ in $K$.
Let $\tau$ be the normalized Haar measure on $K/H$, and fix
an element  $\alpha$ in $\Gamma$ with $a_\alpha(\phi)\not=0$.
Then it can be shown that
$\overline{\chi}_\alpha\phi$ lies in
$L^2(\tau)$ and generates the simply invariant subspace of
$L^2(\tau)$ with trivial cocycle $\exp (i \alpha t)$. We also see that
$\alpha$ is independent of $\Lambda$, meaning that $n\alpha$
lies in $\Lambda$ only for $n=0$ in the integer group
$ \mathbb{Z}$.  This implies that $K$ and $d\sigma$ are  respectively
identified with $K/H\times \mathbb{T}$ and
$d\tau\times d\theta/2\pi$, since $H$ is regarded as  $ \mathbb{T}$.
Thus, for each single generator $\phi$ of  $H^2_0(\sigma)$, we derive
that $\Gamma\,\not=\,\Lambda$. On the other hand, if
$H^2_0(\sigma)$ is singly generated, we may construct
a generator $\phi$ of $H^2_0(\sigma)$ with the property that
$\Gamma=\Lambda$, which contradicts the existence
of single generator of $H^2_0(\sigma)$.

\medskip
In the next section, we establish some notation and elementary
facts about invariant subspaces in the almost periodic setting. Using
group characters, we develop certain properties of single generators
of $H^2_0$-spaces in Section 3. In Section 4, the proof of our Theorem
is provided and then Corollaries are proved by using  a lemma on cocycles.
We conclude the paper with some remarks in Section 5.

\medskip
We refer the reader to \cite{DG}, \cite[Chapter VII]{G2}, \cite{H1}
and \cite[Chapter VIII]{R} for further details on analyticity on
compact abelian groups. Basic results concerning the Hardy
space theory based on uniform algebras can be found in
\cite[Chapter IV]{G2} and \cite{M1}.

\medskip
Part of this work was done while the author was
visiting the University of North Carolina at Chapel Hill, and
he would like to acknowledge the hospitality of the Department
of Mathematics. Especially, he would like to express his sincere gratitude
to Professors Joe Cima and Karl Petersen for helpful discussions.
Thanks are due to the referee as well for his valuable suggestions,
which improved the first version of this paper so much.

\newpage
\section{Extension of almost periodic functions}\label{S2}

It is easy to show that a function $\phi$ in $H^2(\sigma)$ is outer
if and only if $a_0(\phi) \not= 0$ and almost every $t\rightarrow
\phi(x+e_t)$ is outer in $H^2(dt/\pi(1+t^2))$. A weak version of
this fact stated below is often used in what follows:

\begin{lem}
Let $\/\mathfrak{M}$ be a simply invariant subspace of $L^2(\sigma)$
with cocycle $A(x,t)$. A function $\phi$ in $L^2(\sigma)$ generates
$\,\mathfrak{M}_-$ if and only if \/$ \log\vert\phi\,\vert$ does not lie
in $L^1(\sigma)$ and almost every $t \rightarrow A(x,t)\phi(x+e_t)$
is outer in $H^2(dt/\pi(1+t^2))$. In particular,
$H^2_0(\sigma)$ is singly generated by $\phi$ if and only if
$a_0(\phi)\,=\,0$ and almost every $t \rightarrow \phi(x+e_t)$
is outer in $H^2(dt/\pi(1+t^2))$.

\label{lem1}
\end{lem}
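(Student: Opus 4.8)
The plan is to transfer everything to the half-plane Hardy spaces $H^p(dt/\pi(1+t^2))$ through the fibration $t\mapsto\phi(x+e_t)$ and the cocycle description of $\mathfrak M_+$. Put $F(x,t)=A(x,t)\,\phi(x+e_t)$. Two facts quoted in the Introduction do the bookkeeping: $\phi\in\mathfrak M_+$ iff almost every $t\to F(x,t)$ lies in $H^2(dt/\pi(1+t^2))$ (from \cite[\S 3.2]{H1}); and $\mathfrak M[\phi]$ is simply invariant iff \eqref{(3)} holds (from \cite[\S 3.3]{H1}), which, since $|A|=1$, is automatic once almost every $t\to F(x,t)$ is outer (an outer $f$ satisfies $\int\log|f|\,d\mu>-\infty$). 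Since $\phi\notin\mathfrak M_+$ makes both sides of the asserted equivalence fail, we may assume $\phi\in\mathfrak M_+$ from the outset.

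Step 1 (normalization). I would first prove: \emph{$\mathfrak M[\phi]_+=\mathfrak M_+$ if and only if almost every $t\to F(x,t)$ is outer.} For the forward implication, factor each fibre $F(x,\cdot)=I(x,\cdot)G(x,\cdot)$ into inner times outer; if $I(x,\cdot)$ were nonconstant on a set of positive $\sigma$-measure, the set $\mathfrak N=\{\eta\in L^2(\sigma):\text{a.e. }t\to\overline{I(x,t)}\,A(x,t)\,\eta(x+e_t)\in H^2(dt/\pi(1+t^2))\}$ would be a closed invariant subspace (invariance because $\chi_\mu$, $\mu>0$, acts fibrewise as multiplication by the inner function $\chi_\mu(x)e^{i\mu t}$) satisfying $\mathfrak M[\phi]\subseteq\mathfrak N\subseteq\mathfrak M_+$ (the inclusions because $\overline I F=G\in H^2$ and $I\in H^\infty$ fibrewise); combining $\mathfrak M[\phi]_+=\mathfrak M_+$ with the fact that $\mathfrak M_+$ and $\mathfrak M_-$ lie at most one dimension apart, and testing $\mathfrak N$ against fibres that exhaust the fibres of $\mathfrak M_+$, one forces $\overline{I(x,\cdot)}$ to be a unimodular multiplier of $H^2(dt/\pi(1+t^2))$, i.e.\ $I(x,\cdot)$ constant — contradiction. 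Conversely, if $F(x,\cdot)$ is outer a.e., let $B$ be the cocycle of $\mathfrak M[\phi]_+$; since $\phi\in\mathfrak M[\phi]_+$ the fibre $B(x,\cdot)\phi(x+e_\cdot)$ lies in $H^2$, and dividing by the outer fibre $F(x,\cdot)$ shows $B(x,\cdot)\overline{A(x,\cdot)}$ is unimodular of Smirnov class, hence inner a.e.; multiplying the fibres of an arbitrary $\eta\in\mathfrak M_+$ by this inner function puts $\eta$ in $\mathfrak M[\phi]_+$, so $\mathfrak M_+\subseteq\mathfrak M[\phi]_+\subseteq\mathfrak M_+$.

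Step 2 ($\pm$ dichotomy). Granting Step 1, $\mathfrak M[\phi]=\mathfrak M_-$ forces $\mathfrak M[\phi]_+=(\mathfrak M_-)_+=\mathfrak M_+$, hence $F$ is outer a.e.; conversely that outer condition gives $\mathfrak M[\phi]_+=\mathfrak M_+$, so $\mathfrak M[\phi]$ is $\mathfrak M_+$ or $\mathfrak M_-$. Which one is settled by $\log|\phi|$. If $\log|\phi|\in L^1(\sigma)$, choose an outer $\phi_o\in H^2(\sigma)$ with $|\phi_o|=|\phi|$ and write $\phi=q'\phi_o$ with $q'$ unitary; Szeg\"o's theorem gives $\mathfrak M[\phi_o]=H^2(\sigma)$, hence $\mathfrak M[\phi]=q'H^2(\sigma)$, which is normalized, so were $\mathfrak M[\phi]=\mathfrak M_-$ then $\mathfrak M_-$ would be normalized, forcing $\mathfrak M_+=\mathfrak M_-$, and then — the cocycle of $\mathfrak M$ being the coboundary $q'(x)\overline{q'(x+e_t)}$ — one would get $\mathfrak M_+\ne\mathfrak M_-$, absurd; thus $\mathfrak M[\phi]\ne\mathfrak M_-$. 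If instead $\log|\phi|\notin L^1(\sigma)$ and $\mathfrak M_+\ne\mathfrak M_-$, then $\mathfrak M_+=qH^2(\sigma)$ for a unitary $q$, and $\mathfrak M[\phi]=\mathfrak M_+$ would make $\overline q\phi$ an outer generator of $H^2(\sigma)$ by Szeg\"o's theorem, forcing $\log|\phi|=\log|\overline q\phi|\in L^1(\sigma)$, absurd; while if $\mathfrak M_+=\mathfrak M_-$ there is nothing left to choose. This gives the first assertion. For the ``in particular'', take $\mathfrak M=H^2(\sigma)$, so $A\equiv1$ and $\mathfrak M_-=H^2_0(\sigma)$: a $\phi$ with outer fibres lies in $H^2(\sigma)$, and by the criterion recalled at the start of this section it is outer on $K$ — equivalently $a_0(\phi)\ne0$ — exactly when $\log|\phi|\in L^1(\sigma)$ (the nontrivial direction again through Szeg\"o's theorem and the first assertion), so ``$\log|\phi|\notin L^1(\sigma)$'' may be replaced by ``$a_0(\phi)=0$''.

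The step I expect to be the real obstacle is the forward half of Step 1: excluding a nonconstant inner factor in the fibres of $F$. Making the phrase ``the fibres of $\mathfrak M[\phi]$ are those of $\mathfrak M_+$ twisted by $I(x,\cdot)$'' precise is the fibrewise Beurling--Lax phenomenon for $H^2(dt/\pi(1+t^2))$, and it must be argued through the cocycle — as in Helson's treatment of Blaschke cocycles \cite[\S 4.6]{H1} — rather than by a naive disintegration, because the fibration sends multiplication by $\chi_\mu$ to multiplication by $\chi_\mu(x)e^{i\mu t}$, not by $e^{i\mu t}$ alone.
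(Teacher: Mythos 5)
Your skeleton is close to the paper's, and your converse direction (Step 1 backward plus Step 2) is a legitimate alternative to the paper's argument: the paper proves maximality of $\mathfrak{M}[\phi]$ by duality --- a nonzero $q\in\mathfrak{M}_-$ annihilating $H^\infty(\sigma)\phi$ makes $\phi\overline{q}$ analytic, and dividing its fibres by the outer fibre $A(x,\cdot)\phi(x+e_\cdot)$ puts $\overline{A(x,\cdot)q(x+e_\cdot)}$ in $H^2(dt/\pi(1+t^2))$ alongside $A(x,\cdot)q(x+e_\cdot)$, forcing it to be constant in $t$, whence the ergodic theorem exhibits $A$ as a coboundary and $\mathfrak{M}_-=qH^2_0(\sigma)$ excludes $q$ itself --- whereas you compare the cocycle $B$ of $\mathfrak{M}[\phi]_+$ with $A$ and observe that $B\overline{A}$ is a unimodular Smirnov-class fibre function, hence inner. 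Both are correct; yours is shorter but leans entirely on the cocycle--subspace correspondence, while the paper's duality argument is more self-contained. Your reduction of the ``in particular'' to the first assertion is also fine.

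The forward half of Step 1, which you yourself flag as the obstacle, is a genuine gap, and it is exactly where the paper does its work. Two ingredients are missing. First, you never verify that the fibrewise inner part $I(x,t)$ of $F(x,t)=A(x,t)\phi(x+e_t)$ is a cocycle (jointly measurable and satisfying the cocycle identity); this follows from $F(x+e_s,t)=\overline{A(x,s)}\,F(x,s+t)$ and the uniqueness of the inner--outer factorization, but without it your $\mathfrak{N}$ is not the normalized invariant subspace attached to the cocycle $\overline{I}A$ and the machinery does not engage. Second, the decisive step ``testing $\mathfrak{N}$ against fibres that exhaust the fibres of $\mathfrak{M}_+$ forces $\overline{I(x,\cdot)}$ to be a multiplier of $H^2$'' is asserted, not proved: as phrased it would require knowing that the fibres $\{A(x,\cdot)\eta(x+e_\cdot):\eta\in\mathfrak{M}_+\}$ already contain an outer function (or are dense in $H^2(dt/\pi(1+t^2))$), which is essentially what you are trying to establish. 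The paper closes the circle differently and without any fibre-density claim: letting $\mathfrak{N}$ be the normalized subspace with cocycle $A\overline{B}$ ($B$ the inner-part cocycle), \cite[\S 3.2, Theorem 21]{H1} gives $\mathfrak{N}_-\subseteq\mathfrak{M}_-$, while $\mathfrak{N}_+\supseteq\mathfrak{M}[\phi]=\mathfrak{M}_-$ forces $\mathfrak{N}_+\supseteq\mathfrak{M}_+$; hence $\mathfrak{N}_+=\mathfrak{M}_+$, and the \emph{injectivity} of the correspondence between cocycles and normalized invariant subspaces yields $A\overline{B}=A$, i.e.\ $B\equiv 1$. Your own squeeze $\mathfrak{M}[\phi]\subseteq\mathfrak{N}\subseteq\mathfrak{M}_+$ together with $\mathfrak{M}[\phi]_+=\mathfrak{M}_+$ gives $\mathfrak{N}=\mathfrak{M}_+$ in exactly the same way, so the missing ingredient is precisely the uniqueness half of that bijection plus the cocycle identity for $I$; cite those and Step 1 closes.
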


\begin{proof}
\;\;  Suppose that $\mathfrak{M}[\phi]\,=\,\mathfrak{M}_-$ for
$\phi$ in $L^2(\sigma)$. If
$\log\vert\phi\vert$ lies in $L^1(\sigma)$, then there is a unitary
function $q$ on $K$ such that $\mathfrak{M}[\phi]\,=\,qH^2(\sigma)$
by Szeg\"o's theorem. This implies that $\mathfrak{M}[\phi]\,
\not=\,\mathfrak{M}_-$, so $\log\vert\phi\vert$ cannot lie in
$L^1(\sigma)$. Let $B(x,t)$ be the analytic cocycle defined by the
inner part of $t \rightarrow A(x,t)\phi(x+e_t)$. Let $\mathfrak{N}$
be the invariant subspace with cocycle $A\overline{B}(x,t)$. By
\cite[\S 3.2, Theorem 21]{H1}, we see that $\mathfrak{N}_-$
is contained in $\mathfrak{M}_-$. On the other
hand, since almost every $t \rightarrow
A\overline{B}(x,t)\psi(x+e_t)$ lies in $H^2(dt/\pi(1+t^2))$ for each
$\psi$ in $\mathfrak{M}[\phi]$, $\mathfrak{N}_+$ includes
$\mathfrak{M}[\phi]$. This shows that
$\mathfrak{N}_+\,=\,\mathfrak{M}_+$, so $B(x,t)\,\equiv 1$. Then
almost every $t \rightarrow A(x,t)\phi(x+e_t)$ is outer in
$H^2(dt/\pi(1+t^2))$.

Conversely, suppose that $\mathfrak{M}[\phi]$ is contained strictly
in $\mathfrak{M}_-$. Then there is a nonzero function $q$ in
$\mathfrak{M}_-$ such that
$$
\int_{K}\,\psi\phi\overline{q}\,d\sigma\;=\;0\,, \qquad \psi\in
H^\infty(\sigma)\,.
$$
This shows that $\phi\overline{q}$ lies in $H^1(\sigma)$, so almost
every $t \rightarrow \phi\overline{q}(x+e_t)$ lies in
$H^1(dt/\pi(1+t^2))$. Notice that $t \rightarrow A(x,t)q(x+e_t)$ is
in $H^2(dt/\pi(1+t^2))$. Since
$$
\phi(x+e_t)\overline{q(x+e_t)}\;=\;
A(x,t)\phi(x+e_t)\overline{A(x,t)}\overline{q(x+e_t)},
$$
and since $t \rightarrow A(x,t)\phi(x+e_t)$ is outer in
$H^2(dt/\pi(1+t^2))$, we see that almost every
$t \rightarrow \overline{A(x,t)q(x+e_t)}$ is also
in $H^2(dt/\pi(1+t^2))$.
This shows that
$t \rightarrow A(x,t)q(x+e_t)$
is constant for $\sigma-a.e. \,x$ in $K$, and so is $t \rightarrow
\vert q(x+e_t)\vert$. It follows from the ergodic theorem that
$\vert q(x)\vert$ is constant. We then assume $q$ is a unitary
function on $K$. Therefore, $A(x,t)$ is the coboundary
$q(x)\overline{q(x+e_t)}$ and $\mathfrak{M}_-\,=\,qH^2_0(\sigma)$.
Thus $q$ does not lie in $\mathfrak{M}_-$, which is a contradiction.

The last part of assertion
follows from the fact that the cocycle of
$H^2(\sigma)$ equals $1$. Under the assumption that almost
every $t \rightarrow \phi(x+e_t)$ is outer in $H^2(dt/\pi(1+t^2))$,
we see easily $a_0(\phi)\,=\,0$ if and only if
$ \log\vert\phi\,\vert$ does not lie in $L^1(\sigma)$.
Then $\mathfrak{M}[\phi]\,=\,H^2_0(\sigma)$, so the proof is
complete.
\end{proof}

Let $L^1(dt)$ be the usual Lebesgue space on $ \mathbb{R}$. Using
$\{T_t\}_{t\in  \mathbb{R}}$, one may convolve a function $\phi$ in
$L^p(\sigma), 1 \le p < \infty,$ with a function $f$ in $L^1(dt)$ by
setting
$$
(\phi\ast f)(x)\;=\;\int_{-\infty}^\infty \,\phi(x+e_t)f(-t)\,dt
\;=\;\int_{-\infty}^\infty \,\phi(x-e_t)f(t)\,dt\,,
$$
where the integral is a Bochner integral. When $p=\infty$,
the convolution $\phi\ast f$ is defined in the
same way as the weak*-convergent integral. Under the operation
of convolution, $L^p(\sigma)$ becomes an $L^1(dt)$-module such that
$$
\Vert \phi\ast f \Vert_p \;\le\;
\Vert \phi \Vert_p\Vert f \Vert_1\,,\qquad \phi \in L^p(\sigma)\,,
$$
for $f$ in $L^1(dt)$.
The Fourier transform $\hat{f}$ of $f$
is defined by the formula

\begin{equation}
\hat{f}(\lambda)\;=\;\int_{-\infty}^\infty f(t)e^{-i\lambda t}\,dt,
\qquad \lambda \in  \mathbb{R}\,,
 \label{(4)}
\end{equation}
as usual. We see easily
$a_\lambda(\phi\ast f)\,=\,a_\lambda(\phi)\hat{f}(\lambda)$,
if $\lambda$ is in $\Gamma$.
The Poisson kernel $P_{ir}(t)$ for $ \mathcal{H}$ is given
by $P_{ir}(t)\,=\,r/\pi(t^2+r^2)$, for an $r>0$. If $\phi$ is
in $L^1(\sigma)$, then the convolution $\phi\ast P_{ir}$ is
considered as the Poisson integral of $t \rightarrow \phi(x+e_t)$,
that is,
$$
(\phi \ast P_{ir})(x+e_s)\;=\; \int_{-\infty}^\infty
\,\phi(x+e_t)P_{ir}(s-t)\,dt\,.
$$

\begin{lem}
Suppose that $H^2_0(\sigma)$ is singly generated. Then we obtain the
following properties:

\begin{enumerate}
\renewcommand{\labelenumi}{(\alph{enumi})}
\item There is a single generator of $H^2_0(\sigma)$ that is
bounded.

\item If $\phi$ is a bounded generator of $H^2_0(\sigma)$,
then so is each of the functions $\phi \ast P_{ir}$ with $r>0$ and $\phi^n$
for $n=1,2,\cdots.$
\end{enumerate}
\label{lem2}
\end{lem}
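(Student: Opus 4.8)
The plan is to run both parts through the criterion of Lemma~\ref{lem1}: a function $\phi$ is a single generator of $H^2_0(\sigma)$ precisely when $a_0(\phi)=0$ and almost every $t\to\phi(x+e_t)$ is outer in $H^2(dt/\pi(1+t^2))$. Two elementary facts will be used repeatedly: since $(\log|f|)^+\le|f|^2$, one has $(\log|f|)^+\in L^1(\sigma)$ whenever $f\in L^2(\sigma)$; and, by the fact opening Section~\ref{S2}, a function of $H^2(\sigma)$ is outer exactly when $a_0\neq0$ and almost all of its fibres $t\to f(x+e_t)$ are outer.

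For (a) I would start from an arbitrary single generator $\psi$ of $H^2_0(\sigma)$. By Lemma~\ref{lem1}, almost every $t\to\psi(x+e_t)$ is outer and $\log|\psi|\notin L^1(\sigma)$; in particular $\psi\neq0$ $\sigma$-a.e., and since $(\log|\psi|)^+\in L^1(\sigma)$ we must have $(\log|\psi|)^-\notin L^1(\sigma)$, whereas $\log\min(1,1/|\psi|)=-(\log|\psi|)^+\in L^1(\sigma)$. By Szeg\"o's theorem let $O\in H^\infty(\sigma)$ be the outer function with $|O|=\min(1,1/|\psi|)$; being outer in $H^2(\sigma)$, almost every $t\to O(x+e_t)$ is outer. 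The candidate is $\phi:=\psi O$. It lies in $H^2(\sigma)$; $|\phi|=\min(|\psi|,1)\le1$, so it is bounded; almost every $t\to\phi(x+e_t)$ is a product of two outer functions, hence outer; and $\log|\phi|=-(\log|\psi|)^-\notin L^1(\sigma)$. By Lemma~\ref{lem1}, $\phi$ is a bounded single generator of $H^2_0(\sigma)$.

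For (b), let $\phi$ be a bounded single generator; by Lemma~\ref{lem1}, $a_0(\phi)=0$ and almost every $t\to\phi(x+e_t)$ is outer, so in particular the spectrum of $\phi$ sits in $\{\lambda>0\}$. For the powers: $\phi^n$ is bounded, its spectrum lies in $\{\lambda>0\}$ (a sum of $n$ positive exponents is positive), so $\phi^n\in H^2(\sigma)$ with $a_0(\phi^n)=0$, and $t\to(\phi(x+e_t))^n$ is outer because a power of an outer function is outer; Lemma~\ref{lem1} does the rest. For $\phi\ast P_{ir}$: since $\|P_{ir}\|_1=1$, it lies in $L^\infty(\sigma)$ with $\|\phi\ast P_{ir}\|_\infty\le\|\phi\|_\infty$, and $a_\lambda(\phi\ast P_{ir})=a_\lambda(\phi)\hat{P}_{ir}(\lambda)$ forces $\phi\ast P_{ir}\in H^2(\sigma)$ with $a_0(\phi\ast P_{ir})=0$. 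Fibrewise, $(\phi\ast P_{ir})(x+e_s)$ is the Poisson integral of $t\to\phi(x+e_t)$ at $s+ir$, that is $F_x(s+ir)$, where $F_x$ is the analytic continuation to $\mathcal{H}$ of $t\to\phi(x+e_t)$. I would then argue that $F_x(\,\cdot\,+ir)$ is again outer: $F_x$ being outer, it is zero-free on $\mathcal{H}$ and $\log|F_x|$ equals the Poisson integral of its boundary values, whence, via the semigroup identity $P_\rho\ast P_r=P_{\rho+r}$ for the Poisson kernels, $\log|F_x(\,\cdot\,+ir)|$ is itself the Poisson integral of its own boundary values, so $F_x(\,\cdot\,+ir)$ has no inner factor. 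Lemma~\ref{lem1} then gives that $\phi\ast P_{ir}$ is a bounded single generator.

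Nearly everything here is bookkeeping within $H^2(\sigma)$ and $L^\infty(\sigma)$. The one place I expect to need care is the final step of (b) — showing that composing $F_x$ with the interior translation $z\mapsto z+ir$ kills no inner factor — which relies on the harmonicity of $\log|F_x|$ (outer functions being zero-free), its representation as the Poisson integral of its boundary data, the semigroup property of the Poisson kernels, and a Fubini argument to interchange the two convolutions, legitimate because $\log|F_x|$ is bounded above and its negative part is Poisson-integrable.
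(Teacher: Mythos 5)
Your proposal is correct and follows essentially the same route as the paper: both parts are reduced to the criterion of Lemma~\ref{lem1}, with (a) obtained by multiplying an arbitrary generator $\psi$ by the outer function of modulus $\min(1,|\psi|^{-1})$, and (b) by checking that $a_0$ vanishes and the fibres remain outer. The only difference is that you spell out the step the paper states as an observation --- that $s\mapsto F_x(s+ir)$ is again outer, via the semigroup property of the Poisson kernels --- and that justification is sound.
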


\begin{proof}
\;\; Let $\psi$ be a single generator of $H^2_0(\sigma)$.
Then there is an outer function $h$ in $H^2(\sigma)$
such that $\vert h \vert\,=\, \min (1, \vert \psi \vert^{-1})$.
It follows from Lemma \ref{lem1} that the bounded function
$\psi h$ generates $H^2_0(\sigma)$, thus we obtain (a).

To show (b), we observe that $t\rightarrow (\phi \ast P_{ir})(x+e_t)$
as well as $t\rightarrow \phi^n(x+e_t)$ is outer in
$H^2(dt/\pi(1+t^2))$ for $\sigma-a.e. \,x$ in $K$. Since
$a_0(\phi \ast P_{ir})=a_0(\phi^n)=0$, (b) follows
from Lemma \ref{lem1} immediately.
\end{proof}

We next introduce a local product decomposition of $K$, which is
useful for studying analytic functions on $K$. Fix a positive
$\gamma$ in $\Gamma$, and let $K_\gamma$ be the closed subgroup of
all $x$ in $K$ such that $\chi_\gamma(x)= 1$. Then
$K_\gamma\times [0,2\pi/\gamma)$ is identified with $K$ via the map
$(y,s) \rightarrow y+e_s$. Let $\sigma_1$ be the normalized Haar
measure on $K_\gamma$. Then the probability measure
$(\gamma/2\pi)d\sigma_1 \times dt$ on
$K_\gamma\times [0,2\pi/\gamma)$ is
carried by the map to $d\sigma$ on $K$. The one-parameter group
$\{T_t\}_{t\in  \mathbb{R}}$ given by \eqref{(2)} is represented as
$$
T_t(y,s)\;=\;(y+[(t+s)\gamma/2\pi]e_{2\pi/\gamma},
t+s-[(t+s)\gamma/2\pi]2\pi/\gamma)
$$
on $K_\gamma\times [0,2\pi/\gamma)$, where $[t]$ is the
largest integer not exceeding $t$.
Define the homeomorphism $T$ on $K_\gamma$ by
$Ty\,=\,y+e_{2\pi/\gamma}$. We denote by
$\mathscr{O}(\omega,T)$ the orbit
of a point $\omega$ in $(K_\gamma, T)$, that is, the set of all
$T^n \omega$ for $n$ in $ \mathbb{Z}$. Since $\mathscr{O}(\omega,T)$ is dense
in $K_\gamma$, the discrete flow $(K_\gamma, T)$ is also a strictly
ergodic flow, on which $\sigma_1$ is the unique invariant
probability measure. Since $\Gamma$ is countable, $K_\gamma$ is
metrizable (see \cite[2.2.6]{R}).

A function $\phi$ on $K$ has the {\itshape automorphic extension}
$\phi^\sharp$ to $K_\gamma\times  \mathbb{R}$ defined by
$$
\phi^\sharp(y,t)\;=\;\phi(y+[t\gamma/2\pi]e_{2\pi/\gamma}\,,
t-[t\gamma/2\pi]2\pi/\gamma).
$$
Since a function $f$ in $H^1(dt/\pi(1+t^2))$ extends analytically to
$ \mathcal{H}$ by $f(s+ir)\,=\,(f \ast P_{ir})(s)$, we write
$$
\phi^\sharp(y,z)\;=\;(\phi^\sharp\ast P_{ir})(y,s),
\qquad z=s+ir \in  \mathcal{H}\,,
$$
for each $\phi$ in $H^1(\sigma)$. It is clear that $(\phi^\sharp\ast
P_{ir})(y,s) \,=\,(\phi\ast P_{ir})^\sharp(y,s)$ on $K_\gamma\times
 \mathbb{R}$.

The following is due to a property of Lebesgue sets.

\begin{lem}
If \/ $E_1$ is a compact subset of $K_\gamma$ with
$\sigma_1(E_1)\,>\,0$, then there is a closed subset $E$ of $E_1$
with $\sigma_1(E_1)\,=\,\sigma_1(E)$ such that
$\mathcal{O}(\omega, T)\cap E$ is dense in $E$, for
$\sigma_1-a.e.\, \omega$ in $K_\gamma$.
\label{lem3}
\end{lem}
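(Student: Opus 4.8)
The plan is to let $E$ be the topological support of the measure obtained by restricting $\sigma_1$ to $E_1$, and then to use the ergodic theorem to see that $\sigma_1$-almost every $T$-orbit is dense in $E$.

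First I would write $\mu$ for the restriction of $\sigma_1$ to $E_1$ and put $E=\operatorname{supp}(\mu)$, i.e.\ the set of all $x$ in $K_\gamma$ such that $\mu(V)>0$ for every open neighborhood $V$ of $x$. Since $E_1$ is closed and $\mu$ is carried by $E_1$, the support $E$ is a closed subset of $E_1$. Because $K_\gamma$ is metrizable, and hence second countable, the open set $K_\gamma\setminus E$ is a countable union of open sets of $\mu$-measure zero, so $\sigma_1(E_1\setminus E)=\mu(K_\gamma\setminus E)=0$; thus $\sigma_1(E)=\sigma_1(E_1)$. Moreover, for every open set $U$ with $U\cap E\neq\emptyset$ one has $\sigma_1(E\cap U)=\sigma_1(E_1\cap U)=\mu(U)>0$, the last inequality holding because $U$ contains a point of $\operatorname{supp}(\mu)$. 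This is the only place where a ``property of Lebesgue sets'' enters: every nonempty relatively open subset of $E$ has positive measure.

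Next, fix a countable base $\{U_j\}_{j\ge1}$ for the topology of $K_\gamma$ and let $J=\{\,j:U_j\cap E\neq\emptyset\,\}$. For each $j$ in $J$ the set $E\cap U_j$ has positive $\sigma_1$-measure, and $(K_\gamma,T)$, being strictly ergodic, is ergodic for $\sigma_1$; so the ergodic theorem, applied to the indicator of $E\cap U_j$, shows that for $\sigma_1$-a.e.\ $\omega$ the forward orbit $\{T^n\omega:n\ge0\}$ meets $E\cap U_j$. Let $N_j$ be the corresponding $\sigma_1$-null set and set $N=\bigcup_{j\in J}N_j$, again a $\sigma_1$-null set. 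If $\omega\notin N$, then $\mathcal{O}(\omega,T)\cap E\cap U_j\neq\emptyset$ for every $j$ in $J$, and since $\{U_j\}$ is a base this says precisely that $\mathcal{O}(\omega,T)\cap E$ is dense in $E$, which is the conclusion of the lemma.

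The main obstacle is that minimality of $(K_\gamma,T)$ by itself is \emph{not} enough: the closed set $E$ may have empty interior, so the orbit of an arbitrary point of $K_\gamma$ need not return to the ``thin'' pieces $E\cap U_j$. What must be done is to upgrade the purely topological statement ``every open set meeting $E$ has positive measure'' to the measure-theoretic statement ``$\sigma_1$-almost every orbit enters each such open set'', and this is exactly what the ergodic theorem provides; it is also the reason why the density of $\mathcal{O}(\omega,T)\cap E$ in $E$ can only be asserted for $\sigma_1$-a.e.\ $\omega$, not for every $\omega$.
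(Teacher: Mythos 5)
Your proof is correct, but it takes a genuinely different route from the paper's. You take $E$ to be the topological support of $\sigma_1|_{E_1}$, use second countability to see that $E$ carries all of the mass of $E_1$ and that every nonempty relatively open piece $E\cap U_j$ of $E$ has positive measure, and then apply the Birkhoff ergodic theorem once for each member of a countable base, discarding a countable union of null sets. The paper instead defines $E$ as the closure of the set of Lebesgue density points of $E_1$ (an essentially equivalent set), and argues by contradiction on a fixed $\omega$: if the orbit closure $H_\omega=\overline{\mathscr{O}(\omega,T)\cap E}$ were a proper closed subset of $E$, it would have $\sigma_1(H_\omega)<\sigma_1(E)$, and then a continuous majorant $p$ of $I_{H_\omega}$ with $\int p\,d\sigma_1<\sigma_1(E)$, together with \emph{unique} ergodicity (so that Birkhoff averages of the continuous function $p$ converge at the particular point $\omega$, not merely almost everywhere), forces $\omega$ into the Birkhoff null set for $I_E$. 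The two arguments trade different hypotheses: the paper's uses strict ergodicity in an essential way, because the exceptional set $H_\omega$ depends on $\omega$ and one must control a Birkhoff sum at that specific point; your argument needs only ergodicity of $\sigma_1$ plus a countable base, since you quantify over the base before invoking the ergodic theorem, and so it would work verbatim for any ergodic homeomorphism of a compact metric space. Your closing remark correctly identifies why minimality alone is insufficient and why the conclusion is only almost everywhere; in the setting of the paper both proofs are equally valid, and yours is the more elementary and more general of the two.
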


\begin{proof}
\;\; Recall that the {\itshape metric density\/} of $E_1$ is $1$ at
$\sigma_1-a.e. \, \omega$ in $E_1$, meaning that
$$
\lim_{\delta \to 0}\, \frac{\sigma_1(E_1\cap
B(\omega,\delta))}{\sigma_1(B(\omega,\delta))} \;=\;1\,,
$$
where $B(\omega,\delta)$ is the open ball with center $\omega$ and
radius $\delta\,>\,0$. Define $E$ to be the closure of the set of
points of $E_1$ at which the metric density of $E_1$ is $1$.
Clearly, we have $\sigma_1(E_1)\,=\,\sigma_1(E)$, since  $E_1$ is closed.
If $\sigma_1(E)\,=\,1$, then $E\,=\,K_\gamma$.  Since $(K_\gamma, T)$
is strictly ergodic every orbit $\mathcal{O}(\omega, T)$ is dense in $E$.
Assume that $0\,<\,\sigma_1(E)\,<\,1$. It
follows from the ergodic theorem that there is a $\sigma_1-$null set
$N$ in $K_\gamma$ outside which
$$
\lim_{n\to \infty}\,\frac{1}{n}\sum_{j=0}^{n-1}\,
I_E(T^j \omega)\;=\;\sigma_1(E)\,,
$$
where $I_E$ denotes the characteristic function of $E$. Let
$H_\omega$ be the closure of $\mathscr{O}(\omega,T)\cap E$
in $K_\gamma$. We claim that if $E\,\not=\,H_\omega$, then
$\omega$ lies in $N$. Indeed, we see that
$\sigma_1(E\setminus H_\omega)\,>\,0$, since the metric density
of $E$ does not vanish identically on $E\setminus H_\omega$. Let
$p$ be a continuous function on $K_\gamma$ such that $0\,\le\,
p\,\le 1, \;p\,\equiv 1$ on $H_\omega$, and $\int_{K_\gamma}
 p \,d\sigma \,<\,\sigma_1(E)$.
Since $I_E(T^j \omega)\,=\,I_{H_\omega}(T^j \omega)$ for $j$ in
$ \mathbb{Z}$ and since
$(K_\gamma, T)$ is strictly ergodic, we have
$$
\limsup_{n\to \infty}\,\frac{1}{n}\sum_{j=0}^{n-1}\, I_E(T^j \omega)
\; \le\;
\lim_{n\to \infty}\,\frac{1}{n}\sum_{j=0}^{n-1}\,p(T^j\omega)
\;=\; \int_{K_\gamma} \,p\, d\sigma_1 \;<\; \sigma_1(E)
$$
by \cite[\S 4.2, Proposition 2.8]{P}. Thus $\omega$ has to lie in the
null set $N$.
\end{proof}

For each  $\phi$ in $H^\infty(\sigma)$, there is a
$\sigma_1$-null set of $K_\gamma$ outside which
$z \rightarrow \phi^\sharp(y,z)$ is analytic and uniformly bounded
on the upper half plane $ \mathcal{H}$. Recall that if a family of analytic functions
is uniformly bounded, then it forms a normal family.
The next proposition may be regarded as a strengthened form of
Lusin's theorem for analytic functions on $K$, so that it has some
interest of its own. Here we denote by $cl( \mathcal{H})$ the
closure of $ \mathcal{H}$ in $ \mathbb{R}^2$.

\begin{prop}
Let \/$\phi$ be a function in $H^\infty(\sigma)$, and let $\epsilon \,>\,0$.
Then there is a closed subset $E$ of $K_\gamma$ with $\sigma_1(E)\,>\,1-\epsilon$
having the following properties:
\begin{enumerate}
\renewcommand{\labelenumi}{(\alph{enumi})}
\item
The convolution $(\phi^\sharp\ast P_{ir})(y,t)$ is
continuous on $E\times \mathbb{R}$, for a given $r\,>\,0$.

\item  For $\sigma_1-a.e. \,\omega$ in $K_\gamma$,
the function $(\phi^\sharp\ast P_{ir})(T^j\omega,z)$ on
$(\mathscr{O}(\omega, T)\cap E)\times cl( \mathcal{H})$ extends to
$(\phi^\sharp\ast P_{ir})(y,z)$ on $E\times cl( \mathcal{H})$.
\end{enumerate}
\label{prop1}
\end{prop}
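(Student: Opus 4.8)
The plan is to prove (a) first via Lusin's theorem combined with the normal‑family structure of the automorphic extensions, and then to bootstrap (b) out of (a) together with Lemma \ref{lem3}. For part (a), recall that for each fixed $r>0$ the function $(\phi^\sharp\ast P_{ir})(y,t)$ is, for $\sigma_1$-a.e. $y$ in $K_\gamma$, the value at height $r$ of the analytic, uniformly bounded (by $\Vert\phi\Vert_\infty$) function $z\mapsto\phi^\sharp(y,z)$ on $\mathcal{H}$. One checks that $y\mapsto (\phi^\sharp\ast P_{ir})(y,\cdot)$ is a $\sigma_1$-measurable map into $C(\mathbb{R})$ (with the topology of uniform convergence on compacta), since it is a Bochner-type integral against the Poisson kernel of the a.e.-defined function $t\mapsto\phi^\sharp(y,t)$ in $L^\infty$; this measurability is routine once one notes that $P_{ir}\in L^1(dt)$ and $\phi\mapsto\phi\ast P_{ir}$ is continuous. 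Then apply Lusin's theorem: there is a closed $E\subset K_\gamma$ with $\sigma_1(E)>1-\epsilon$ on which this map is continuous, i.e. $(\phi^\sharp\ast P_{ir})(y,t)$ is continuous on $E\times\mathbb{R}$. (One should take intersection with the full-measure set where $z\mapsto\phi^\sharp(y,z)$ is genuinely analytic and bounded, so that the continuity really is the continuity of the Poisson extension; this costs nothing in measure.)

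For part (b), apply Lemma \ref{lem3} to the set $E_1:=E$ from part (a): we may shrink $E$ to a closed subset of the same $\sigma_1$-measure (still $>1-\epsilon$, after a harmless re-choice of $\epsilon$) on which $\mathscr{O}(\omega,T)\cap E$ is dense in $E$ for $\sigma_1$-a.e. $\omega$. Now fix such an $\omega$ and consider the family of analytic functions $\{z\mapsto(\phi^\sharp\ast P_{ir})(T^j\omega,z):T^j\omega\in E\}$ on $\mathcal{H}$. These are uniformly bounded by $\Vert\phi\Vert_\infty$ — here I use $\phi\in H^\infty(\sigma)$, so that $\phi^\sharp\ast P_{ir}$ inherits the same sup bound — hence form a normal family, and in fact are equicontinuous on compact subsets of $cl(\mathcal{H})$: on $\mathcal{H}$ by Cauchy estimates, and up to the boundary $\mathbb{R}$ because there they agree with the continuous function $(\phi^\sharp\ast P_{ir})(y,t)$ from part (a), which is uniformly continuous on $E\times(\text{compact})$. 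Since $\mathscr{O}(\omega,T)\cap E$ is dense in $E$ and $(\phi^\sharp\ast P_{ir})(\cdot,z)$ extends continuously in $y\in E$ for each fixed $z$ (again by part (a) on the boundary, and by normal-family/Vitali arguments in the interior), a standard equicontinuity argument shows that the assignment $T^j\omega\mapsto\big(z\mapsto(\phi^\sharp\ast P_{ir})(T^j\omega,z)\big)$ extends to a continuous map $E\to C(cl(\mathcal{H}))$, whose value at $y\in E$ is necessarily $(\phi^\sharp\ast P_{ir})(y,z)$. That is the asserted extension.

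The main obstacle is establishing the joint continuity and the boundary behaviour cleanly: one must show that the interior values (governed by the normal family on $\mathcal{H}$) and the boundary values on $\mathbb{R}$ (governed by part (a)) fit together into a single continuous function on $E\times cl(\mathcal{H})$, and that passing to the closure of the orbit does not destroy analyticity in $z$. The right tool is the Vitali–Porter theorem (a locally bounded sequence of analytic functions converging pointwise on a set with a limit point converges locally uniformly to an analytic limit): applied along sequences $T^{j_k}\omega\to y$ in $E$, it gives that $(\phi^\sharp\ast P_{ir})(T^{j_k}\omega,\cdot)$ converges locally uniformly on $\mathcal{H}$, the limit is analytic, and by part (a) its boundary values match $(\phi^\sharp\ast P_{ir})(y,\cdot)$; uniqueness of the Poisson extension then identifies the limit as $(\phi^\sharp\ast P_{ir})(y,z)$, independently of the chosen sequence. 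Everything else — the measurability in part (a), the uniform bound, the reduction using Lemma \ref{lem3} — is routine.
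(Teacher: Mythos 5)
Your proof is correct in substance but reaches the conclusion by a genuinely different route from the paper's. For (a), the paper applies Lusin's theorem to $\phi\ast P_{ir}$ on $K$ itself, then selects $E$ so that the fiber condition $\frac{\gamma}{2\pi}\int_0^{2\pi/\gamma}I_F(y,s)\,ds>1-\epsilon$ holds for all $y$ in $E$; joint continuity on $E\times\mathbb{R}$ is then extracted from normality of the family $\{(\phi^\sharp\ast P_{ir/2})(y,z)\,;\,y\in E\}$ together with the uniqueness (identity) principle applied on the infinite compact set $\{y\}\times J\subset F$ that the fiber condition supplies. You instead apply Lusin directly to the $C(\mathbb{R})$-valued map $y\mapsto(\phi^\sharp\ast P_{ir})(y,\cdot)$, which yields (a) in one step and dispenses with the fiber condition altogether; this is legitimate ($C(\mathbb{R})$ with the compact-open topology is Polish, the map is Borel because each evaluation $y\mapsto(\phi\ast P_{ir})(y+e_t)$ is, and Lusin's theorem holds for Polish-valued maps), and is arguably cleaner. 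For (b) both arguments reduce, via Lemma \ref{lem3}, to continuity of $(\phi^\sharp\ast P_{ir})(y,z)$ on all of $E\times cl(\mathcal{H})$ plus density of $\mathscr{O}(\omega,T)\cap E$ in $E$; the paper obtains the interior continuity as a by-product of its identification step at level $r/2$, while you derive it from the boundary continuity established in (a).

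One step you should tighten: the appeal to Vitali--Porter is not quite right as stated, because at that point you only know pointwise convergence of $(\phi^\sharp\ast P_{ir})(T^{j_k}\omega,\cdot)$ on the boundary $\mathbb{R}$, which is not a subset of the domain $\mathcal{H}$ containing an interior limit point, so Vitali--Porter does not apply directly; likewise ``they agree with the continuous function from (a)'' controls the restrictions to $\mathbb{R}$ but does not by itself give equicontinuity of the family at boundary points of $cl(\mathcal{H})$. Both issues are repaired by the observation you already half-make: for $z=s+iu$ in $\mathcal{H}$ one has $(\phi^\sharp\ast P_{ir})(y,z)=\int_{-\infty}^{\infty}(\phi^\sharp\ast P_{ir})(y,t)P_{iu}(s-t)\,dt$, so bounded pointwise convergence of the boundary functions, which (a) provides along $y_{n}\to y$ in $E$, gives pointwise convergence at every $z$ in $\mathcal{H}$ by dominated convergence, after which normality upgrades this to local uniform convergence and identifies every subsequential limit. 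Alternatively, note that $(\phi^\sharp\ast P_{ir})(y,\cdot)=(\phi^\sharp\ast P_{ir/2})(y,\cdot+ir/2)$ extends analytically, with bound $\Vert\phi\Vert_\infty$, to $\{\mathrm{Im}\,z>-r/2\}$, which yields equicontinuity on compact subsets of $cl(\mathcal{H})$ by Cauchy estimates; this is precisely why the paper works at level $r/2$.
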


\begin{proof}\;\;
Since $\phi\ast P_{ir}$ lies in $H^\infty(\sigma)$, Lusin's theorem
asserts that there is a compact subset $F$ of $K$ with
$\sigma(F)\,>\,1-\epsilon^2$ on which $\phi\ast P_{ir}$ is continuous.
Regarding $F$ as a subset of $K_\gamma\times [0,2\pi/\gamma)$, we
choose a compact subset $E$ of $K_\gamma$ with
$\sigma_1(E)\,>\,1-\epsilon$ such that $E$ satisfies the property of
Lemma \ref{lem3} and
\begin{equation}
\frac{\gamma}{2\pi}\int^{2\pi/\gamma}_0\,I_F(y,s)\,ds\;>\;
1-\epsilon\,,\qquad y\in E\,.
\label{(5)}
\end{equation}
In addition, we assume that $z \rightarrow (\phi^\sharp\ast
P_{ir/2})(y,z)$ is analytic on $ \mathcal{H}$ and
$$
\vert (\phi^\sharp\ast P_{ir/2})(y,z)\vert\, \le \,\Vert \phi
\Vert_\infty\,, \qquad y\in E\,.
$$
Then the family
$$
\mathscr{F}\;=\;
\left\{\,(\phi^\sharp\ast P_{ir/2})(y,z)\, ; \,y \in E\right\}
$$
forms a normal family on $ \mathcal{H}$. Let $\{y_n\}$ be a
sequence in $E$ tending to $y$. Since $\mathscr{F}$ is normal, there
is a subsequence $\{y_j\}$ of $\{y_n\}$ such that $(\phi^\sharp\ast
P_{ir/2})(y_j,z)$ converges uniformly on compact subsets of
$ \mathcal{H}$ to a bounded analytic function $f(z)$ on
$ \mathcal{H}$. Let us show that $f(z)\,=\,(\phi^\sharp\ast
P_{ir/2})(y,z)$. Indeed, we observe by \eqref{(5)} that $F\cap
\,(\{y\}\times [0, 2\pi/\gamma))$ contains an infinite compact set of
the form $\{y\}\times J$. Since
$$
(\phi^\sharp\ast P_{ir})(y,t)\;=\;(\phi^\sharp\ast P_{ir/2})(y,t+ir/2)
\;=\;f(t+ir/2),\qquad t \in J\,,
$$
it follows from the uniqueness principle that
$f(z)\,=\,(\phi^\sharp\ast P_{ir/2})(y,z)$. This shows that if
$(y_n,t_n)$ tends to $(y,t)$, then $(\phi^\sharp\ast P_{ir})(y_n,t_n)$
tends to $(\phi^\sharp\ast P_{ir})(y,t)$. Thus (a) holds. We notice
that $(\phi^\sharp\ast P_{ir/2})(y,z)$ is also continuous on $E\times
 \mathcal{H}$.

On the other hand, by Lemma \ref{lem3}, $\mathscr{O}(\omega, T)\cap E$
is dense in $E$ for $\sigma_1-a.e. \,\omega$ in $K_\gamma$. Since
$(\mathscr{O}(\omega, T)\cap E)\times cl( \mathcal{H})$ is dense in
$E\times cl( \mathcal{H})$ and since $(\phi^\sharp\ast
P_{ir})(y,z)$ is continuous on $E\times cl( \mathcal{H})$,
the function $(\phi^\sharp\ast P_{ir})(T^j\omega,t)$ on
$(\mathscr{O}(\omega, T)\cap E)\times cl( \mathcal{H})$
extends to $(\phi^\sharp\ast P_{ir})(y,t)$ on $E\times
 \mathcal{H}$. Thus (b) follows immediately.
\end{proof}


We make some remarks on Proposition \ref{prop1}. Since $t \rightarrow
\phi^\sharp(y,t)$ lies in $H^\infty(dt/\pi(1+t^2))$ for each
$y$ in $E$, we see that $(\phi^\sharp\ast P_{ir})(y,t+2\pi/\gamma)\,=\,
(\phi^\sharp\ast P_{ir})(T y,t)$. Then
$E\cup TE\cup \cdots \cup T^nE$ also satisfies the
properties (a) and (b) and $\sigma_1(E\cup TE\cup \cdots \cup T^nE)$
converges to $1$, as $n\to \infty$, by the recurrence theorem (see
\cite[\S 2.3, Theorem 3.2]{P}).
However, to obtain $\phi $ itself, we need a
version of Fatou's theorem as discussed in \cite[Theorem II]{M2}.
Denote by $\mathcal{O}(x,\{T_t\}_{t\in  \mathbb{R}})$
the \textit{orbit} of $x$ in $(K, \{T_t\}_{t\in  \mathbb{R}})$.
With the notation above, when
$x=(y,s)$ in $K_\gamma\times [0,2\pi/\gamma)$, we see that
$
\mathcal{O}(x,\{T_t\}_{t\in  \mathbb{R}})\,=\,
\mathcal{O}(y, T)\times [0,2\pi/\gamma).
$
For $x$ in $K$, we say that
$t\rightarrow (\phi\ast P_{ir})(x + e_t)$ \textit{extends to}
$\phi\ast P_{ir}$ if, for each $\epsilon>0$, there
is a compact subset $F\,=\,F(\epsilon, \phi)$ of $K$ with
$\sigma(F)\,>\,1-\epsilon$ such that $\phi\ast P_{ir}$ is continuous
on $F$ and $\mathcal{O}(x, \{T_t\}_{t\in  \mathbb{R}})\cap F$ is
dense in $F$. The above proof may be modified so as to apply
to functions in $H^1(\sigma)$ as well.

The next lemma is an immediate consequence of Proposition \ref{prop1}.

\begin{lem}
Let $\phi$ be a function in $H^\infty(\sigma)$, and let $r>0$. Then
there is an invariant $\sigma-$null set $N=N(\phi)$ in $K$
outside which $t\rightarrow (\phi\ast P_{ir})(x+e_t)$
extends to $\phi\ast P_{ir}$.
\label{lem4}
\end{lem}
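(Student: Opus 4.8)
The plan is to exhaust $K$ by a sequence of ``good'' compact sets produced by Proposition \ref{prop1} and to push the density statement of Lemma \ref{lem3} from the base $K_\gamma$ over to the whole flow $(K,\{T_t\}_{t\in\mathbb{R}})$. Fix the positive element $\gamma$ of $\Gamma$ used in the local product decomposition. For each integer $k\ge1$, apply Proposition \ref{prop1} to $\phi$ with $\epsilon=1/k$; this yields a closed set $E^{(k)}\subseteq K_\gamma$ with $\sigma_1(E^{(k)})>1-1/k$ such that $(y,t)\mapsto(\phi^\sharp\ast P_{ir})(y,t)$ is continuous on $E^{(k)}\times\mathbb{R}$, and such that $E^{(k)}$ satisfies the conclusion of Lemma \ref{lem3}: there is a $\sigma_1$-null set $N_1^{(k)}\subseteq K_\gamma$ outside which $\mathscr{O}(\omega,T)\cap E^{(k)}$ is dense in $E^{(k)}$.

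Set
$$
F^{(k)}\;=\;\{\,y+e_s\;:\;y\in E^{(k)},\ 0\le s\le 2\pi/\gamma\,\}\;\subseteq\;K.
$$
As the continuous image of the compact set $E^{(k)}\times[0,2\pi/\gamma]$, the set $F^{(k)}$ is compact, and since it contains the image of $E^{(k)}\times[0,2\pi/\gamma)$ under the local product decomposition, $\sigma(F^{(k)})\ge\sigma_1(E^{(k)})>1-1/k$. Recalling that $(\phi^\sharp\ast P_{ir})(y,s)=(\phi\ast P_{ir})(y+e_s)$ for $0\le s<2\pi/\gamma$ and the periodicity relation $(\phi^\sharp\ast P_{ir})(y,t+2\pi/\gamma)=(\phi^\sharp\ast P_{ir})(Ty,t)$ noted after Proposition \ref{prop1}, a routine compactness argument shows that $\phi\ast P_{ir}$ is continuous on $F^{(k)}$; the only place needing attention is the seam $s=2\pi/\gamma$, where $y+e_{2\pi/\gamma}=Ty$.

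Now put
$$
N\;=\;\bigcup_{k\ge1}\ \bigl(\ \bigcup_{j\in\mathbb{Z}}T^jN_1^{(k)}\ \bigr)\times[0,2\pi/\gamma)\;\subseteq\;K.
$$
Since $T$ preserves $\sigma_1$, each set $\bigcup_{j}T^jN_1^{(k)}$ is $\sigma_1$-null, so $N$ is $\sigma$-null; moreover $N$ is invariant under $\{T_t\}_{t\in\mathbb{R}}$ because its base is $T$-invariant and its fibre is the full interval $[0,2\pi/\gamma)$. Fix $x=(y,s)\notin N$ and $\epsilon>0$, and choose $k>1/\epsilon$. Then $y\notin\bigcup_jT^jN_1^{(k)}$, so $\mathscr{O}(y,T)\cap E^{(k)}$ is dense in $E^{(k)}$. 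Given any $y'+e_{s'}\in F^{(k)}$ with $y'\in E^{(k)}$ and $s'\in[0,2\pi/\gamma]$, choose integers $j_n$ with $T^{j_n}y\in E^{(k)}$ and $T^{j_n}y\to y'$; then $T^{j_n}y+e_{s'}\to y'+e_{s'}$, while $T^{j_n}y+e_{s'}=y+e_{2\pi j_n/\gamma+s'}$ belongs to $\mathcal{O}(x,\{T_t\}_{t\in\mathbb{R}})\cap F^{(k)}$. Hence $\mathcal{O}(x,\{T_t\}_{t\in\mathbb{R}})\cap F^{(k)}$ is dense in $F^{(k)}$. Since $\sigma(F^{(k)})>1-\epsilon$ and $\phi\ast P_{ir}$ is continuous on $F^{(k)}$, the compact set $F^{(k)}$ witnesses that $t\mapsto(\phi\ast P_{ir})(x+e_t)$ extends to $\phi\ast P_{ir}$, which proves the lemma.

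The substance of the statement is already contained in Proposition \ref{prop1} and Lemma \ref{lem3}; the only real work is the coordinate bookkeeping of the second and third paragraphs, namely checking that $F^{(k)}$ is compact with $\phi\ast P_{ir}$ continuous on it and that the flow orbit of $x$ meets $F^{(k)}$ densely, and this is a direct unwinding of the local product decomposition of $K$.
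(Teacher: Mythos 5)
Your proof is correct and follows essentially the same route as the paper's: take the good closed sets $E$ from Proposition \ref{prop1}, form the compact sets $E\times[0,2\pi/\gamma]$ in $K$, and lift the orbit-density statement of Lemma \ref{lem3} from $(K_\gamma,T)$ to the flow on $K$. You are in fact slightly more careful than the paper, which works with a single $\epsilon$ and leaves implicit both the union over $\epsilon=1/k$ needed to make $N$ independent of $\epsilon$ and the verification that density of $\mathscr{O}(y,T)\cap E$ in $E$ yields density of $\mathcal{O}(x,\{T_t\}_{t\in\mathbb{R}})\cap F$ in $F$.
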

\begin{proof} For a given $\epsilon>0$,
let $E$ be a closed subset of $K_\gamma$ with $\sigma_1(E)\,>\,1-\epsilon$
which has the property (a) and (b) of Proposition \ref{prop1}.
Putting $F\,=\,E\times [0,2\pi/\gamma]$, we regard $F$ as a compact subset of
$K$. By (b) of Proposition \ref{prop1}, we choose an invariant null set $N'=N'(\phi)$
in $(K_\gamma, T)$ outside which $\mathscr{O}(\omega, T)\cap E$ is
dense in $E$. If we set $\, N\,=\, N'\times [0,2\pi/\gamma)$,
then the $\sigma-$null set $N$ satisfies the desired property.
\end{proof}

Let $\Omega$ be a compact metric space on which $ \mathbb{R}$ acts as a Borel
transformation group. This means that there is a one-parameter group
$\{U_t\}_{t\in  \mathbb{R}}$ of Borel isomorphisms on $\Omega$ such that
the map $(\omega,t) \rightarrow U_t\omega$ of $\Omega\times  \mathbb{R}$
to $\Omega$ is a Borel map. The pair $\fO$ is referred to a
{\itshape Borel flow}. Especially, $\fO$ is called a {\itshape
continuous flow}, if $U_t$ is a homeomorphism on $\Omega$
and the map $(\omega,t) \rightarrow U_t\omega$ is continuous on
$\Omega\times  \mathbb{R}$. We often write $\omega+t$
for the translate $U_t\omega$ of $\omega$ by $t$.
Let $\mu$ be an invariant probability measure on $\fO$ which is
{\itshape ergodic},  meaning that $\mu(E)\,=\,1$ or $0$ for each
invariant subset $E$ of $\Omega$.
A function $\phi$ in $L^1(\mu)$ is {\it analytic} if $t \rightarrow
\phi(\omega+t)$ lies in $H^1(dt/\pi(1+t^2))$ for $\mu-a.e. \, \omega$
in $\Omega$. Then the {\itshape ergodic Hardy space\/}
$H^p(\mu), 1\le p \le \infty,$
is defined to be the space of all analytic functions in $L^p(\mu)$.
It follows from \cite[Theorem I]{M1} that $\mu$ is a representing measure
for $H^\infty(\mu)$, for which $H^\infty(\mu)$ is a weak*-Dirichlet algebra
in $L^\infty(\mu)$. This fundamental result enables us to apply the
Hardy space theory based on uniform algebras, and most of the machinery
of invariant subspaces on an almost periodic flow $\fK$ can be reconstructed
(see \cite{F}, \cite{M1} and \cite{M2} for related topics). As we mentioned earlier, the $H^2_0-$spaces may be singly generated in the situation of
ergodic flows other than almost periodic flows (see \cite{T2} and \S 5 (b)).

\medskip
Let $A(x,t)$ be a cocycle on an almost periodic flow $\fK$ and define
the Borel flow $\fKT$ by
\begin{equation}
S_t(x,e^{i\theta})\;=\;(T_tx, A(x,t)e^{i\theta}),\qquad (x,e^{i\theta})
\in K\times \mathbb{T},
\label{(6)}
\end{equation}
which is called the {\itshape skew product} of $K$ and $ \mathbb{T}$ induced by
$A(x,t)$. Then $d\sigma \times d\theta/2\pi$ is an invariant probability
measure on $K\times \mathbb{T}$.
Observe that each function $f$ in $L^2(d\sigma \times d\theta/2\pi)$ is
represented as
$$
f(x,e^{i\theta})\;=\;\sum_{n=-\infty}^{\infty}\, \phi_n(x)e^{in\theta}\,,
$$
where the coefficients $\phi_n$ are in $L^2(\sigma)$. From this
fact, it follows easily that $d\sigma \times d\theta/2\pi$ is ergodic on
$\fKT$ if and only if $A(x,t)^n$ is a coboundary only for $n=0$
 (see \cite[\S 6.2]{H1} for details).

\bigskip
\section{Approximation to generators}\label{S3}

We now turn to the structure of compact group $K$, under the
assumption that $H^2_0(\sigma)$ is singly generated by
$\phi$ in $H^2_0(\sigma)$.
By multiplying by a suitable outer function, if necessary, we
can assume that $\phi$ is a function in $L^\infty(\sigma)$ with
$1\,\le\,\Vert \phi \Vert_\infty \,<\,+\infty$.
Furthermore, we also assume that $\Gamma$ is the smallest group
containing all $\lambda$ such that
$a_\lambda(\phi) \,\not= 0$, that is, the smallest group over
which Fourier series,
$$
\phi(x)\;\sim\,\sum_{\Gamma \ni \lambda >
0}\;a_\lambda(\phi)\chi_\lambda(x)\,,
$$
holds.
Similarly, denote by $\Lambda$ the smallest group containing all
$\lambda$ such that
$a_\lambda(\vert \phi \vert) \,\not= 0$.
We observe that the Fourier series  of
$$
\left(\vert\phi\vert^2+\epsilon\,\right)^{1/2}
\;=\; \exp\left\{\frac{1}{2} \,\log
\,(\phi\overline{\phi}+\epsilon)\right\}\,,\qquad \epsilon>0\,,
$$
is represented on $\Gamma$, by considering the Taylor series
of $z \rightarrow \log z $ at a large positive. This shows
that $\Lambda$ is a subgroup of $\Gamma$, since
$$
a_\lambda(\vert\phi\vert)\;=\;\lim_{\epsilon\to 0}\;\;
a_\lambda\left((\vert\phi\vert^2+\epsilon)^{1/2}\right)
$$
by \eqref{(1)}.
Since $\log \vert \phi \vert$ does not lie in $L^1(\sigma)$, the generator
$\phi$ cannot be periodic in $\fK$. Then $\Gamma$
as well as $\Lambda$ is a countable dense subgroup of
$ \mathbb{R}$, endowed with discrete topology. Let $H$ be the
annihilator of $\Lambda$, meaning that $H$ is the closed subgroup of
all $x$ in $K$ such that $\chi_\lambda(x)\,=\,1$ for all $\lambda$ in
$\Lambda$. Then the dual group of $\Lambda$ is identified with the
quotient group $K/H$ (see \cite[2.1]{R}). We denote by $\tau$ the
normalized Haar measure on $K/H$. Let $\pi$ be the canonical
homomorphism of $K$ onto $K/H$. For each $x$ in $K$, we write $\bar{x}$
for $\pi(x)=x+H$. When a function $\psi$ on $K$ is represented as
$\psi\,=\,\tilde{\psi}\circ\pi$ for a function $\tilde{\psi}$ on
$K/H$, we usually identify $\psi$ with
 $\tilde{\psi}$, so that $\psi(x)=\psi(\bar{x})$.
Then we say descriptively that $\psi$ \textit{is generated by a function
on} $K/H$. If $1\le p\le \infty$, then $L^p(\tau)$ and $H^p(\tau)$ are
subspaces of $L^p(\sigma)$ and $H^p(\sigma)$, respectively.

Since almost every $t \rightarrow \phi(x+e_t)$ is outer in
$H^\infty(dt/\pi(1+t^2))$ by Lemma \ref{lem1}, we see that
$$
-\infty\;<\;\log \vert (\phi \ast P_{ir})(x)\vert \;=\; (\log \vert
\phi\vert \ast P_{ir})(x)
$$
for a given $r\,>\,0$. Since $\log \vert \phi\vert$ is not in
$L^1(\sigma)$ and $\log \vert \phi\vert\,\le\,\Vert \phi
\Vert_\infty$, Fubini's theorem shows that
$$
\int_K \log \vert \phi\ast P_{ir}\vert \,d\sigma \; =\;
\int_K (\log \vert \phi\vert \ast P_{ir})\,d\sigma \; =\;
\int_K \log \vert \phi \vert \,d\sigma \; =\; -\infty\,.
$$
Let $g\,=\,\phi \ast P_{ir}$. Then Lemma \ref{lem1} shows that
$g$ is also a bounded generator of $H^2_0(\sigma)$.
Since $\hat{P}_{ir}(\lambda)\,=\,e^{-r \vert \lambda \vert}$
by \eqref{(4)}, we obtain $a_\lambda(g)\,=\,
a_\lambda(\phi\ast P_{ir})\,=\,a_\lambda(\phi)e^{-r\vert \lambda \vert}$,
hence $a_\lambda(\phi)\, \not=\,0$ if and only if
$a_\lambda(g)\,\not=\,0$. Thus the generator
$g$ plays the same role as $\phi$.
For $n = 1, 2, \ldots\,$,
we then denote by $\phi_n$ the outer function in $H^\infty(\sigma)$ with
$\vert \phi_n \vert \,=\,\max (1/n, \vert \phi \vert)$. Since
$-\log n \le \log \vert \phi_n \vert \le \Vert\phi \Vert_\infty$,
each $\phi_n^{-1}$ is also an outer function in $H^\infty(\tau)$.
Putting $g_n\,=\,\phi_n\ast P_{ir}$, we obtain a sequence $\{g_n\}$
of outer functions in $H^\infty(\tau)$ with
$\Vert g_n \Vert_\infty\le \Vert\phi \Vert_\infty$.
Notice that  $t \rightarrow g(x+e_t)$ and
$t \rightarrow g_n(x+e_t)$ extend analytically up to $\{ Re\, z > -r\}$.
Let us look into the relation between $g$ and $g_n$. Since
$$
\vert g_n(x)\vert\;=\;\exp\{(\log\vert\phi_n\vert \ast P_{ir})(x)\}\,,
$$
we obtain
\begin{equation}
\vert g_1(x)\vert\;\ge\;\vert g_2(x)\vert\;\ge\; \cdots\;
\ge\;\vert g_n(x)\vert \; \longrightarrow \;\vert g(x)\vert\,,
\qquad n\to\infty\,,
\label{(7)}
\end{equation}
for $\sigma-a.e.\, x\,$ in $K$. Although $g$ may not be in
$L^\infty (\tau)$, we observe that
$\vert g_n(x)\vert\,=\,\vert g_n(\bar{x})\vert $ and
$\vert g(x)\vert\,=\,\vert g\vert(\bar{x})$.
By \eqref{(7)}, it is easy to see that almost every
$ t \rightarrow \vert (g/g_n)(x+e_t)\vert$ converges pointwise to $1$
on $ \mathbb{R}$. Put $G^x_n(t) = g_n(x+e_t)$ and $G^x(t) = g(x+e_t)$.
Let $N_0$ be an invariant null set in $K$ outside which the property of
Lemma \ref{lem4} holds simultaneously for $\phi$ and all $\phi_n$.
Moreover, for $x$ in $K\setminus N_0$, we may assume $G^x_n(t)$
and $G^x(t)$ are outer functions in $H^\infty(dt/\pi(1+t^2))$.
Then the family of all analytic extensions $G^x_n(z)$
of $G^x_n(t)$ to $\{Re\, z >  -r\}$ forms a normal family, since
$\vert G^x_n(z)\vert \le \Vert\phi \Vert_\infty$.

The following lemma is crucial in our proof of the Theorem.
\begin{lem}
For a bounded generator $\phi$ of $H^2_0(\sigma)$,
let $\Lambda,\, H$ and $\tau$ be as above. Choose an $\alpha$
in $\Gamma$ with $a_\alpha(\phi)\,\not= 0$. Then
$\overline{\chi_\alpha} \phi$ is generated by
a function on $K/H$, so lies in $L^\infty(\tau)$.
Consequently, $\Gamma$ is generated by $\Lambda$ and $\alpha$.
\label{lem5}
\end{lem}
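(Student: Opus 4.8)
The plan is to analyze the analytic extensions $G^x_n(z)$ and their limit and show that the modulus of the generator, suitably normalized, is forced to depend only on the coset $\bar x$, which is already how $|\phi|$ behaves; the content is to upgrade this from $|\phi|$ to $\overline{\chi_\alpha}\phi$ itself. First I would fix $x$ outside the invariant null set $N_0$, so that $G^x(t)=g(x+e_t)$ and each $G^x_n(t)=g_n(x+e_t)$ are bounded outer functions in $H^\infty(dt/\pi(1+t^2))$ extending analytically to $\{\operatorname{Re}z>-r\}$, with $|G^x_n|$ decreasing to $|G^x|$ pointwise and $|G^x_n(z)|\le\|\phi\|_\infty$. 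Since the $G^x_n$ form a normal family, pass to a subsequence converging locally uniformly on $\{\operatorname{Re}z>-r\}$ to an analytic function $F^x(z)$; because each $G^x_n$ is outer with the explicit formula $|g_n(x)|=\exp\{(\log|\phi_n|\ast P_{ir})(x)\}$ and $\log|\phi_n|\downarrow\log|\phi|$, monotone convergence identifies $|F^x|=|G^x|$ on the real line, and then outerness of $G^x$ forces $F^x=c_x\,G^x$ for a unimodular constant $c_x$; by normalizing (e.g. fixing the argument at one point via the $a_\alpha$ coefficient) one gets $G^x_n\to G^x$ locally uniformly, not merely along a subsequence.

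Next I would exploit the structure coming from $\Lambda$. Because $|\phi|$, hence each $|\phi_n|$ and each $|g_n|$, is generated by a function on $K/H$ (its Fourier spectrum lies in $\Lambda$), the outer function $g_n$ is determined on each fibre $\bar x+H$ up to a unimodular constant depending on the fibre; writing $H\cong\mathbb T$ and using the local product picture $K\cong K/H\times\mathbb T$ together with $d\sigma\cong d\tau\times d\theta/2\pi$, the restriction of $g_n$ to a fibre is of the form $c_n(\bar x)\,e^{ik_n\theta}\cdot(\text{outer factor on }K/H)$ for some integer $k_n$; but $g_n$ is analytic on $K$ and $1/\phi_n\in H^\infty(\tau)$, which pins down these exponents. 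The key computation is to track the single exponent $\alpha$: since $a_\alpha(\phi)\ne0$ and $\alpha$ projects to a generator of $\Gamma/\Lambda$, the function $\overline{\chi_\alpha}\phi$ has all its Fourier mass on $\Lambda$, i.e. is generated by a function on $K/H$. I would make this precise by showing $a_\lambda(\overline{\chi_\alpha}\phi)=a_{\lambda+\alpha}(\phi)=0$ whenever $\lambda\notin\Lambda$: indeed if $\lambda+\alpha$ is in the spectrum of $\phi$ then $\lambda+\alpha\in\Gamma$, and one must rule out $\lambda\notin\Lambda$ using that $g_n\to g$ forces the ratio $g/g_n$ (which lives on $K/H$ in modulus) to be, in the limit, a function whose only possible "extra" exponent is a multiple of the coset of $\alpha$ — and single-generation (outerness of $G^x$, with no inner factor) kills all multiples but $\pm\alpha$, leaving $\overline{\chi_\alpha}\phi$ spectrally supported on $\Lambda$.

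Finally, once $\overline{\chi_\alpha}\phi$ is generated by a function on $K/H$, it is automatically bounded (it has the same modulus as $\phi$, which is in $L^\infty(\sigma)$, and now factors through $K/H$ so lies in $L^\infty(\tau)$); and since its Fourier coefficients generate a group containing $\Lambda$ and $\Gamma$ is the smallest group containing the spectrum of $\phi=\chi_\alpha\cdot(\overline{\chi_\alpha}\phi)$, we conclude $\Gamma$ is generated by $\Lambda$ together with $\alpha$, as claimed.

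I expect the main obstacle to be the middle step: rigorously justifying that passing to the normal-family limit $g_n\to g$ on fibres, combined with outerness, forces the fibre-wise winding/exponent of $\overline{\chi_\alpha}\phi$ to be trivial modulo $\Lambda$. The naive worry is that an inner factor could appear in the limit on some fibres and contribute extra spectrum; this is precisely where Lemma~\ref{lem1} (outerness of almost every $t\to\phi(x+e_t)$) and the uniform bound from the normal family must be used together to show the limit $F^x$ has no inner part, so that no exponent outside $\Lambda+\mathbb Z\alpha$ survives, and then a separate spectral/counting argument (using $1/\phi_n\in H^\infty(\tau)$ and that $n\alpha\in\Lambda$ only for $n=0$) removes all $|n|\ge2$.
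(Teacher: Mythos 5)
There is a genuine gap, and it sits exactly where you predicted. Two problems. First, your opening step --- ``by normalizing (e.g.\ fixing the argument at one point via the $a_\alpha$ coefficient) one gets $G^x_n\to G^x$ locally uniformly, not merely along a subsequence'' --- cannot work if the normalizing constants are global (independent of $x$), and is useless if they depend on $x$. Indeed, the paper's Lemma~\ref{lem6} shows that for $\sigma$-a.e.\ $x$ the sequence $\{G^x_n(t)\}$ does \emph{not} converge, and its proof survives multiplication of each $g_n$ by any unimodular constant $c_n$: if $c_nG^x_n$ converged on a set of positive measure, $g/(c_ng_n)$ would tend to an invariant, hence constant, unimodular function, forcing $a_0(g/g_n)\neq 0$ for large $n$ and contradicting the fact that each $g/g_n$ generates $H^2_0(\sigma)$. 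So the subsequential phase $e^{i\gamma}$ genuinely depends on $x$ and on the subsequence; you cannot remove it, and everything downstream that relies on an unambiguous limit $F^x=G^x$ collapses. Second, your middle step invokes $H\cong\mathbb{T}$ and the product picture $K\cong K/H\times\mathbb{T}$ with $d\sigma\cong d\tau\times d\theta/2\pi$. That decomposition is the content of Lemma~\ref{lem8}, which is proved \emph{from} Lemma~\ref{lem5} (and Lemma~\ref{lem7}); at the stage of Lemma~\ref{lem5} one only knows $\Lambda\subset\Gamma$, and $H$ could a priori be any compact quotient dual to $\Gamma/\Lambda$. Using the fibre structure $c_n(\bar x)e^{ik_n\theta}$ here is circular.

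The paper's actual mechanism is designed precisely to sidestep the non-convergence of $g_n$: it never takes a limit of $g_n$ alone, only of phase-invariant products. One picks $f_k\in L^1(dt)$ with $\hat f_k(\alpha)=1$, $\Vert f_k\Vert_1=1$ and $\hat f_k$ supported in $(\alpha-\delta_k,\alpha+\delta_k)$, so that $g\ast f_k\to a_\alpha(g)\chi_\alpha$ in $L^2$ and hence $\overline{(g\ast f_k)}g\to\overline{a_\alpha(g)}\,\overline{\chi_\alpha}g$. It then suffices to show each $\overline{(g\ast f_k)}g$ lies in $L^\infty(\tau)$. Since $\phi_n$ is the outer function with modulus $\max(1/n,|\phi|)$, whose logarithm has spectrum in $\Lambda$, each $g_n$ and $g_n\ast f_k$ already lies in $L^\infty(\tau)$; and in the product $\overline{(g_m\ast f_k)}\,g_m$ the subsequential phases $e^{-i\gamma}$ and $e^{i\gamma}$ cancel (convolution being linear, $g_m\ast f_k$ picks up the same phase as $g_m$), so the full sequence converges pointwise and, by bounded convergence, in $L^2(\sigma)$ to $\overline{(g\ast f_k)}g$, which therefore lies in $L^2(\tau)$. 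This phase-cancellation trick is the missing idea in your proposal; without it, or some substitute that is insensitive to the $x$-dependent unimodular ambiguity, the spectral claim $a_{\lambda+\alpha}(\phi)=0$ for $\lambda\notin\Lambda$ is not reached. Your final deduction that $\Gamma$ is generated by $\Lambda$ and $\alpha$ is fine once the main claim is in hand.
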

\begin{proof}\;
Let $\{\delta_k\}$ be a decreasing sequence tending to $0$.
Then there is a sequence $\{f_k\}$ in $L^1(dt)$ such that
$\hat{f_k}(\alpha)\,=\,1$, $\Vert f_k \Vert_1 \,= 1$ and
$\hat{f_k}\,=\,0$ outside $(\alpha-\delta_k, \alpha+\delta_k)$,
by modifying the function $t\rightarrow (1/\pi )\,\sin^2 t/t^2$ in $L^1(dt)$.
Since $a_\lambda(g)\,=\,a_\lambda(\phi)e^{-r \vert \lambda \vert}$, we see that
$\overline{\chi_\alpha} \phi$ lies in $L^2(\tau)$ if and only if
so does $\overline{\chi_\alpha} g$. Thus we may replace $\phi$
with $g$ in our argument. Since
$a_\lambda(g\ast f_k)\,=\,a_\lambda(g)\hat{f_k}(\lambda)$,
we observe that
$$
\Vert g\ast f_k - a_\alpha(g)\chi_\alpha \Vert^2_2\;=\; \sum_{0
<\vert \lambda\vert <\delta_k}\, \vert
a_{\alpha+\lambda}(g)\hat{f_k}(\alpha+\lambda)\vert^2 \to 0\,,
\qquad k\to \infty\,,
$$
by the Parseval theorem and that
$$
\Vert \overline{(g\ast f_k)}g -
\overline{a_\alpha(g)}(\overline{\chi_\alpha} g) \Vert_2\;\le\;
\Vert g\ast f_k - a_\alpha(g)\chi_\alpha \Vert_2 \, \Vert g
\Vert_\infty \,.
$$
From these facts, we conclude that if each $\overline{(g\ast f_k)}g$
lies in $L^\infty(\tau)$, then so does $\overline{\chi_\alpha}g$.
Since the outer function $\phi_n$ lies in $L^\infty(\tau)$,
so do $g_n$ and $g_n \ast f_k$. Then each $\overline{(g_n\ast f_k)}g_n$
lies in $L^\infty(\tau)$. Let us show that the sequence
$\{ \overline{(g_n\ast f_k)}g_n\}$ converges to $\{ \overline{(g\ast f_k)}g \}$
in $L^2(\sigma)$,
from which we obtain that $\overline{(g\ast f_k)}g$
lies in $L^\infty(\tau)$.
Indeed, in the notation above, if we fix an $x$ in $K\setminus N_0$,
there is a subsequence $\{g_m\}$ of $\{g_n\}$ such that $\{G_m^x(t)\}$
converges pointwise to $e^{i\gamma}G^{x}(t)$ in $H^\infty(dt/\pi(1+t^2))$
with $0\le\gamma<2\pi$, where $\gamma$ depends on $x$ and $\{g_m\}$.
This implies that
$$
\overline{(g_m\ast f_k)}(x+e_t) \rightarrow
e^{-i\gamma}\overline{(g\ast f_k)}(x+e_t),
\qquad m\to\infty,
$$
pointwise in $L^\infty(dt/\pi(1+t^2))$.
Note that every subsequence of $\{g_n\}$ contains such a subsequence
$\{g_m\}$. Since $e^{-i\gamma}e^{i\gamma}\,=\,1$, the sequence
$\{g_n\}$ itself satisfies
$$
\overline{(g_n\ast f_k)}\, g_n(x+e_t) \rightarrow \overline{(g\ast
f_k)}\, g(x+e_t)\,,
\qquad n\to\infty,
$$
pointwise in $L^\infty(dt/\pi(1+t^2))$. Since
$$
\Vert \overline{(g_n\ast f_k)}\, g_n \Vert_\infty \;\le \; \Vert g_n
\Vert_\infty^2 \Vert f_k\Vert_1 \;\le \;
\Vert \phi \Vert_\infty^2 \Vert f_k\Vert_1,
$$
it follows from the bounded convergence theorem that
$$
\Vert \overline{(g_n\ast f_k)}\, g_n - \overline{(g \ast f_k)}\, g
\Vert_2 \rightarrow 0\,, \qquad n\to\infty,
$$
so that $\overline{(g \ast f_k)}\, g$ lies in $L^\infty(\tau)$. Therefore,
$\overline{\chi_\alpha}\, g$ as well as $\overline{\chi_\alpha}\, \phi$
is generated by a function on $K/H$. On the other hand, by
the property of $\Gamma$, each element in $\Gamma$
has the form $\lambda + n\alpha$ for $\lambda$ in $\Lambda$
and $n$ in $ \mathbb{Z}$, thus the proof is complete.
\end{proof}

Recall that $K/H$ coincides with the dual group of $\Lambda$.
Let $\alpha$ be as in Lemma \ref{lem5} and let $C(\bar x,t)$ be the
trivial cocycle on $K/H$ defined by $C(\bar x,t)=\exp (i \alpha t)$.
Since $\alpha$ is positive, $C(\bar x,t)$ is an analytic cocycle.
We denote by $(K/H\times \mathbb{T}, \{S_t\}_{t\in  \mathbb{R}})$
the skew product of $K/H$ and $ \mathbb{T}$ induced by $C(\bar x,t)$,
which is the continuous flow obtained by
\begin{equation*}
S_t(\bar x,e^{i\theta})\;=\;(T_t\,\bar x, C(\bar x,t)e^{i\theta}),
\qquad (\bar x,e^{i\theta}) \in K/H\times \mathbb{T}\,.
\end{equation*}
Then $d\tau \times d\theta/2\pi$ is the invariant probability
measure on $K/H\times \mathbb{T}$ (see the end of the preceding section).
Let us represent the generator $g$ and all the limits of subsequences of $\{g_n\}$
on $K/H\times \mathbb{T}$, which is the smallest product group with
such property. Each function $\psi$ on $K/H$ extends naturally to the
one on $K/H\times \mathbb{T}$ by setting $\psi(\bar x,e^{i\theta})\,=\,\psi(\bar x)$.
Since $\vert g \vert$ and $g_n$ are functions on $K/H$, they belong to
$L^\infty(d\tau\times d\theta/2\pi)$.

With the above notation, we fix a $w$ in $K\setminus N_0$. Since
$G_n^{w}(t)$ and $G^{w}(t)$ are outer functions in $H^2(dt/\pi(1+t^2))$
which extend analytically to $\{Re \,z >  -r\}$, we may assume that
$G_n^{w}(t)$ converges pointwise to $G^{w}(t)$ on $ \mathbb{R}$,
by multiplying each $g_n$ by a suitable constant of modulus one.
By regarding Lemma \ref{lem4}, the functions $G_n^{w}(t)$ and $G^{w}(t)$
extend to $g_n$ and $g$, respectively.
However, we obtain the following:
\begin{lem}
\; For $\sigma-a.e. \,x$ in $K$, $G_n^x(t)$ never converges pointwise
on $\mathbb{R}$.
Consequently, we find two subsequences $\{g_m\}$ and
$\{g_k\}$ of $\{g_n\}$ such that $G_m^x(t)$ and $G_k^x(t)$
converge to $e^{i\beta}G^{x}(t)$ and
 $e^{i\gamma}G^{x}(t)$ with $0\le \beta < \gamma <2\pi$, respectively.
\label{lem6}
\end{lem}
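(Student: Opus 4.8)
The plan is to argue by contradiction. Suppose that $t\mapsto G_n^x(t)$ converges pointwise on $\mathbb{R}$ for all $x$ in some set of positive $\sigma$-measure; I will deduce that the bounded generator $g$ is a function on $K/H$, which is impossible. Let $B$ be the set of $x$ in $K\setminus N_0$ for which $\lim_nG_n^x(t)$ exists for every real $t$. Since the analytic extensions $G_n^x(z)$ are uniformly bounded by $\|\phi\|_\infty$ on $\{Re\,z>-r\}$ and hence form a normal family, Vitali's theorem shows that $x\in B$ as soon as $\lim_nG_n^x(t)$ exists for every rational $t$; in particular $B$ is measurable. Moreover the orbit relation $G_n^{x+e_s}(t)=G_n^x(s+t)$ makes $B$ invariant under $\{T_t\}_{t\in\mathbb{R}}$, so by ergodicity of $\sigma$ we have $\sigma(B)=1$.

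Next I would identify the limit on $B$. For $x\in B$, Vitali's theorem upgrades the pointwise convergence to local uniform convergence $G_n^x\to F_x$ on $\{Re\,z>-r\}$; by Hurwitz's theorem $F_x$ is zero-free there, being not identically $0$ since $|F_x(t)|=|G^x(t)|>0$ (the modulus identity on $\mathbb{R}$ comes from \eqref{(7)} transported along the orbit of $x$, and $|g(x+e_t)|=\exp((\log|\phi|\ast P_{ir})(x+e_t))>0$). Because $G^x$ and each $G_n^x$ are outer, $\log|G_n^x(z)|$ is the Poisson integral of the boundary function $t\mapsto\log|G_n^x(t)|$; letting $n\to\infty$ (the boundary log-moduli decrease to $\log|G^x(t)|$, which is integrable since $G^x$ is outer, and are dominated) gives $\log|F_x(z)|=\log|G^x(z)|$ throughout $\{Re\,z>-r\}$. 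Hence $F_x/G^x$ is holomorphic of constant modulus $1$, so is a unimodular constant $e^{i\gamma(x)}$, i.e.\ $F_x=e^{i\gamma(x)}G^x$. The phase $\gamma(x)$—read off from the value of $F_x$ at a fixed interior point—depends measurably on $x$, and $G_n^{x+e_s}(t)=G_n^x(s+t)$ forces $\gamma(x+e_s)=\gamma(x)$; so $\gamma$ is flow-invariant and measurable, hence constant $\gamma_0$ for $\sigma$-a.e.\ $x$.

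Evaluating $F_x$ at the real point $0$ gives $g_n(x)\to e^{i\gamma_0}g(x)$ for $\sigma$-a.e.\ $x$, and since $\|g_n\|_\infty\le\|\phi\|_\infty$ the bounded convergence theorem turns this into convergence in $L^2(\sigma)$; thus $a_\lambda(g_n)\to e^{i\gamma_0}a_\lambda(g)$ for every $\lambda$ in $\Gamma$. But each $\phi_n$ lies in $H^\infty(\tau)$, so $g_n=\phi_n\ast P_{ir}$ lies in $H^\infty(\tau)$ and $a_\lambda(g_n)=0$ for every $\lambda$ outside $\Lambda$; therefore $a_\lambda(g)=0$ for all $\lambda$ in $\Gamma\setminus\Lambda$, i.e.\ $g$ (and hence $\phi$) is a function on $K/H$. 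This contradicts $a_\alpha(g)=a_\alpha(\phi)\,e^{-r\alpha}\neq0$, since $\alpha$ does not lie in $\Lambda$—equivalently, the $\mathbb{T}$-factor of $K/H\times\mathbb{T}$ is genuine, as noted before the lemma. Hence $\sigma(B)=0$, which is the first assertion.

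For the last statement, fix $x$ outside the $\sigma$-null set $B\cup N_0$. The normal family $\{G_n^x(z)\}$ has convergent subsequences, and by the argument of the second paragraph every subsequential limit is $e^{i\delta}G^x$ for some $\delta\in[0,2\pi)$; since $\{G_n^x(t)\}$ itself does not converge, at least two distinct phases must occur, and labelling them $0\le\beta<\gamma<2\pi$ produces subsequences $\{g_m\}$ and $\{g_k\}$ with $G_m^x(t)\to e^{i\beta}G^x(t)$ and $G_k^x(t)\to e^{i\gamma}G^x(t)$ (the subsequences and phases may depend on $x$). I expect the main obstacle to be the rigidity step of the second paragraph—forcing the local uniform limit of the outer functions $G_n^x$ to be a unimodular multiple of $G^x$—which genuinely uses the zero-free, Poisson-reproducing structure of outer functions together with the monotone modulus convergence \eqref{(7)}; the measurability and ergodic-invariance bookkeeping for $B$ and $\gamma$ is the secondary point requiring care.
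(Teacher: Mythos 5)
Your argument is clean up to the last step, and the rigidity part (every subsequential limit of $G_n^x$ on $\{Re\,z>-r\}$ is $e^{i\delta}G^x$ for a unimodular constant, by normality, Hurwitz, and the Poisson representation of $\log|G_n^x|$ together with \eqref{(7)}) is exactly the mechanism the paper itself relies on in and around Lemma \ref{lem5}. The measurability and invariance bookkeeping for $B$ and for the phase $\gamma(x)$ is also fine. The problem is the final contradiction. You conclude that $a_\lambda(g)=0$ for all $\lambda\in\Gamma\setminus\Lambda$ and then contradict $a_\alpha(g)\neq 0$ ``since $\alpha$ does not lie in $\Lambda$.'' But at the point where Lemma \ref{lem6} sits, the statement $\alpha\notin\Lambda$ (equivalently $\Gamma\neq\Lambda$) is not available: it is precisely the content of Lemma \ref{lem7} (refined in Lemma \ref{lem8}), and the paper's proofs of Lemmas \ref{lem7} and \ref{lem8} use Lemma \ref{lem6} and the subsequence machinery built from it. From Lemma \ref{lem5} one only knows $\Gamma=\langle\Lambda,\alpha\rangle$; nothing yet excludes $\Lambda=\Gamma$, in which case $K/H=K$, ``$g$ is a function on $K/H$'' is vacuous, and your contradiction evaporates. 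So as written the proof is circular within the paper's architecture, and you would have to prove $\Gamma\neq\Lambda$ independently first --- which is a genuinely nontrivial fact (the whole skew-product argument of Lemma \ref{lem7}).

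The paper sidesteps this entirely by looking at the quotients $g/g_n$ rather than at $g_n$ itself. Since $g$ and $g_n^{-1}$ are both analytic and bounded, $a_0(g/g_n)=a_0(g)\,a_0(g_n^{-1})=0$, and almost every $t\mapsto (g/g_n)(x+e_t)$ is outer, so by Lemma \ref{lem1} each $g/g_n$ is again a single generator of $H^2_0(\sigma)$; in particular $\int_K g/g_n\,d\sigma=0$ for every $n$. If $\{G_n^x(t)\}$ converged on a set of positive measure, your own rigidity step shows $(g/g_n)(x)\to e^{-i\gamma(x)}$, an invariant limit of modulus one, hence a constant $c\neq 0$ a.e.\ by ergodicity; since $|g/g_n|\le 1$ by \eqref{(7)}, bounded convergence gives $\int_K g/g_n\,d\sigma\to c\neq 0$, a contradiction that needs no information about $\Lambda$ versus $\Gamma$. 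If you replace your final paragraph's Fourier-support argument with this $a_0(g/g_n)=0$ argument, the rest of your write-up (including the derivation of the two distinct phases $\beta<\gamma$ from non-convergence plus normality) goes through.
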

\begin{proof}
\;\; Since $1/n\le \vert g_n(x)\vert \le \Vert\phi \Vert_\infty$, each
$g_n^{-1}$ is also an outer function in $H^\infty(\sigma)$.
This implies that almost every $t\rightarrow (g/g_n)(x+e_t)$
is an outer function in $H^\infty(dt/\pi(1+t^2))$.
Furthermore, since
$$
a_0(g/g_n)\;=\; \int_K\,g/g_n\,d\sigma
\;=\;\int_K\,g\,d\sigma\int_K\,g_n^{-1}\,d\sigma\;=\;0\,,
$$
Lemma \ref{lem1} assures that each $g/g_n$ is also a single
generator of $H^2_0(\sigma)$.

Denote by $F$ the invariant set of all $x$ in $K$ for which
 $\{G_n^x(t) \}$ itself converges.
Suppose that $F$ has positive measure. By \eqref{(7)} and
the ergodic theorem, $(g/g_n)(x)$ converges to an
 invariant function on $F$, so to a constant of modulus
 one on $K$. Then the bounded convergence theorem
shows that $a_0(g/g_n)\not=0$ for large $n$.
Such $g/g_n$ cannot be a single generator of $H^2_0(\sigma)$,
which contradicts the above observation.
\end{proof}

Let us mention a few remarks derived from Lemma \ref{lem6}. When
$0\le \beta < 2\pi$, $\mathcal{Z}(\beta)$ denotes the subgroup
of $ \mathbb{T}$ generated by $e^{i\beta}$, that is,
$$
\mathcal{Z}(\beta)\;=\;
\left\{e^{ij\beta}\;;\;j\in  \mathbb{Z}\,\right\}\,.
$$
If $\beta/2\pi$ is rational, then the order of
$\mathcal{Z}(\beta)$ is finite.
Fix two points $w$ and $x$ in $K\setminus N_0$. We assume
by Lemma \ref{lem6} that a subsequence $\{g_k\}$ of
$\{g_n\}$ satisfies that $G_k^w(t)$
and $G_k^x(t)$ converge respectively to $e^{ij\beta}G^{w}(t)$ and
$e^{i(j+1)\beta}G^{x}(t)$ for $j$ in $ \mathbb{Z}$,
by multiplying each $g_k$ by a suitable constant of modulus one.
Denote by $\mathcal{O}(\bar{w})$ the orbit
 $\mathcal{O}(\bar{w}, \{T_t\}_{t\in {\bf R}})$ of
$\bar{w}$ in $(K/H, \{T_t\}_{t\in  \mathbb{R}})$.
Then $g$ is determined naturally on
$\mathcal{O}(\bar{w})\times \mathcal{Z}(\beta)$ and
$\mathcal{O}(\bar{x})\times \mathcal{Z}(\beta)$ to represent the
limits of the subsequence $\{g_k\}$ of  $\{g_n\}$ on them.
For each $m$ in $ \mathbb{Z}$, we see also that every limit of
$\{g_k^m\}$ is represented  on these product subsets.

If $\ell$ is a positive integer, then $g^\ell$ as well as $\phi^\ell$
is also a bounded generator of $H^2_0(\sigma)$ by Lemma \ref{lem2}.
We choose an invariant null set
$N(\ell)$ including $N_0$ outside which
a subsequence $\{G_j^x(t)^\ell\}$ of $\{G_n^x(t)^\ell\}$
converges to $e^{i\gamma}G^{x}(t)^\ell$ with
$0<\gamma < 2\pi$.  Define the invariant null set
 $N_1$ by $N_1\;=\;\cup_{\ell=1}^{\infty}\,N(\ell)\,$.
When $\ell\,=\, m\, !$,
we take again a subsequence $\{G_k^x(t)\}$ of
$\{G_j^x(t)\}$ converging  to $e^{i\beta(m)}G^{x}(t)$ with
$e^{i\beta(m) \ell}\,=\,e^{i\gamma}$. Then the order of
$\mathcal{Z}(\beta(m))$ is larger than $m$, so
$\cup_{m=1}^{\infty}\,\mathcal{Z}(\beta(m))$ is dense in $ \mathbb{T}$.
Therefore, to represent $g$ and all the limits of subsequences of
$\{g_n\}$ on each orbit, the product group $K/H\times \mathbb{T}$
is the smallest one. Let us explain the meaning more precisely.
Under the assumption of Lemma \ref{lem5}, we put
$h_\alpha=\overline{\chi_\alpha}g$. Then
 $h_\alpha$ lies in $L^2(\tau)$.
Define the group character $\mathcal{P}_\alpha$ of $K/H\times \mathbb{T}$
by the projection $\mathcal{P}_\alpha(\bar x,e^{i\theta})=e^{i\theta}$. Since
$$
(h_\alpha \mathcal{P}_\alpha)(S_t(\bar x,e^{i\theta}))
\,=\,h_\alpha(\bar x +e_t) C(\bar x,t)e^{i\theta}
\,=\,h_\alpha(\bar x +e_t) e^{i\alpha t}e^{i\theta},
$$
the function
$t\rightarrow (h_\alpha \mathcal{P}_\alpha)(S_t(\bar x,e^{i\theta}))$
is an outer function in $H^\infty(dt/\pi(1+t^2))$ for
$d\tau \times d\theta/2\pi-a.e.\, (\bar x,e^{i\theta})$
in $K/H\times \mathbb{T}$. Then the outer function $G^x(t)$ equals
$t\rightarrow (h_\alpha \mathcal{P}_\alpha)(S_t(\bar x,e^{i\theta}))$
for some $\theta$ with $0\le \theta < 2\pi$.
In order to represent consistently all kinds of limits of subsequences
$\{G_k^x(t)\}$, we require the family of all outer functions $t\rightarrow (h_\alpha \mathcal{P}_\alpha)(S_t(\bar x,e^{i\theta}))$ with $0\le \theta < 2\pi$.

\begin{lem}
Let $\Gamma$ and $\Lambda$ be as above. Then $\Lambda$ cannot be
equal to  $\Gamma$.
\label{lem7}
\end{lem}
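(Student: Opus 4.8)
The plan is to argue by contradiction. Suppose $\Lambda=\Gamma$. By Lemma~\ref{lem5} the group $\Gamma$ is generated by $\Lambda$ together with any exponent $\alpha$ of $g$ (equivalently of $\phi$), so $\Lambda=\Gamma$ means $\alpha\in\Lambda$ for every such $\alpha$; equivalently the annihilator $H$ of $\Lambda$ in $K$ coincides with the annihilator of $\Gamma$, which is trivial. Hence $K/H=K$, $\tau=\sigma$, and for each $\alpha$ with $a_\alpha(g)\neq 0$ the character $\chi_\alpha$ already lives on $K/H=K$ and satisfies $\chi_\alpha(x+e_t)=e^{i\alpha t}\chi_\alpha(x)$.

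Fix once and for all an $\alpha>0$ with $a_\alpha(g)\neq 0$, and form the skew product $(K\times\mathbb{T},\{S_t\})$ induced by the cocycle $C(\bar x,t)=e^{i\alpha t}$ on $K/H=K$. Since $n\alpha\in\Gamma$ for every $n\in\mathbb{Z}$, each power $C(\bar x,t)^n=e^{in\alpha t}$ is the coboundary $\overline{\chi_{n\alpha}(x)}\,\chi_{n\alpha}(x+e_t)$; in particular $\overline{\chi_\alpha(x)}e^{i\theta}$ is a non-constant $S_t$-invariant function on $K\times\mathbb{T}$. By the criterion recorded at the end of Section~\ref{S2}, $d\sigma\times d\theta/2\pi$ is therefore \emph{not} ergodic on $(K\times\mathbb{T},\{S_t\})$.

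On the other hand, by Lemma~\ref{lem5} the function $h_\alpha=\overline{\chi_\alpha}g$ lies in $L^2(\tau)=L^2(\sigma)$, and the discussion preceding the present lemma shows that $h_\alpha\mathcal{P}_\alpha$, with $\mathcal{P}_\alpha(\bar x,e^{i\theta})=e^{i\theta}$, represents $g$ together with all subsequential limits of $\{g_n\}$ on $K/H\times\mathbb{T}$: for $d\tau\times d\theta/2\pi$-a.e.\ $(\bar x,e^{i\theta})$ the function $t\to(h_\alpha\mathcal{P}_\alpha)(S_t(\bar x,e^{i\theta}))$ is outer with vanishing zeroth coefficient, and --- since the powers $g^{m!}$ are again bounded generators of $H^2_0(\sigma)$ by Lemma~\ref{lem2}, and Lemma~\ref{lem6} applied to them produces phases $e^{i\beta(m)}$ whose cyclic groups $\mathcal{Z}(\beta(m))$ have order exceeding $m$, so that $\bigcup_m\mathcal{Z}(\beta(m))$ is dense in $\mathbb{T}$, while $\{T_t\}$ is ergodic on $K$ --- the \emph{entire} family of these outer functions over $0\le\theta<2\pi$ is needed to account for the limits $e^{i\gamma}G^x(t)$ guaranteed a.e.\ by Lemma~\ref{lem6}. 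Thus $h_\alpha\mathcal{P}_\alpha$ is a single generator of $H^2_0(d\tau\times d\theta/2\pi)$ on $(K/H\times\mathbb{T},\{S_t\})$ in the sense of Lemma~\ref{lem1}; by that lemma this forces the flow $(K/H\times\mathbb{T},\{S_t\})$ to be ergodic, contradicting the previous paragraph. Hence $\Lambda$ cannot equal $\Gamma$.

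The delicate point is precisely the implication in the last paragraph: that the need for the \emph{whole} $\theta$-family of outer functions $t\to(h_\alpha\mathcal{P}_\alpha)(S_t(\bar x,e^{i\theta}))$ in order to realize all subsequential limits of $\{g_n\}$ is equivalent to the ergodicity of the skew product (equivalently, to ``$n\alpha\in\Lambda$ only for $n=0$''). Concretely one must rule out that, if $(K/H\times\mathbb{T},\{S_t\})$ decomposed, the generator $h_\alpha\mathcal{P}_\alpha$ together with the sequence $\{g_n\}$ would be confined to one ergodic component of measure strictly less than one, so that only a proper subfamily of those outer functions could arise as limits --- which would contradict the density of $\bigcup_m\mathcal{Z}(\beta(m))$ in $\mathbb{T}$. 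Granting Lemmas~\ref{lem5} and \ref{lem6} and the discussion between them, the other ingredients (the triviality of $H$, the coboundary identities, and the non-ergodicity criterion) are routine.
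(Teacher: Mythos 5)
Your setup matches the paper's: assume $\alpha\in\Lambda$ (so $K/H=K$), form the skew product induced by $C(x,t)=e^{i\alpha t}$, and observe that since $e^{i\alpha t}=\overline{\chi_\alpha(x)}\chi_\alpha(x+e_t)$ is a coboundary, $\overline{\chi_\alpha}\,\mathcal{P}_\alpha$ is a nonconstant invariant function and $d\sigma\times d\theta/2\pi$ is not ergodic. But the step that is supposed to produce the contradiction is missing. You assert that $h_\alpha\mathcal{P}_\alpha$ is ``a single generator of $H^2_0(d\tau\times d\theta/2\pi)$ in the sense of Lemma \ref{lem1}'' and that ``by that lemma this forces the flow to be ergodic.'' Lemma \ref{lem1} contains no such implication: it is a criterion for a function to generate $\mathfrak{M}_-$ \emph{given} the flow and its (ergodic) Hardy-space structure, and its proof (Szeg\H{o}'s theorem, the cocycle correspondence, the weak*-Dirichlet algebra property from Muhly's theorem) presupposes ergodicity of the underlying measure. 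So you cannot invoke it on a flow whose ergodicity is exactly what is in question. The heuristic you offer in its place --- that non-ergodicity would confine the subsequential limits to ``a proper subfamily'' of the $\theta$-family, contradicting the density of $\bigcup_m\mathcal{Z}(\beta(m))$ --- is a plausible picture, but you never show that confinement to an ergodic component actually restricts which phases $e^{i\gamma}$ can occur as limits of $\{G_n^x\}$. You flag this yourself as ``the delicate point,'' and it is precisely the point that needs a proof.

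The paper closes this gap by a concrete computation rather than an ergodicity dichotomy. Writing $x=(y,s)\in K_\alpha\times[0,2\pi/\alpha)$ and using the closed invariant set $E=K_\alpha\times\{(s,s)\}$, on which $(E,\{S_t\})$ is isomorphic to $(K,\{T_t\})$, one tracks a convergent subsequence $\{G_k^x\}$ \emph{along an entire orbit} via the identity \eqref{(8)}. The skew-product representation then forces the limit, viewed as a function of the orbit parameter $s$, to be $s\mapsto e^{i\alpha\beta}(\overline{\chi_\alpha}\,g)(y+e_s)=e^{i\alpha\beta}\,\overline{\chi_\alpha(y)}\,e^{-i\alpha s}G^y(s)$. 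The extra factor $e^{-i\alpha s}$ destroys outerness, whereas the normal-family analysis behind Lemma \ref{lem6} guarantees that every subsequential limit of $\{G_n^y\}$ is a unimodular \emph{constant} times the outer function $G^y$. That is the contradiction. In short: you have the right stage (the non-ergodic skew product) but not the play; the argument that actually refutes $\Lambda=\Gamma$ is the orbitwise phase computation showing the limits would fail to be outer, and that computation does not appear in your proposal.
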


\begin{proof}
\;\; Let $\alpha$ be as in Lemma \ref{lem5}. Then
$\alpha$ lies in $\Lambda$ if and only if  $\Lambda \,=\, \Gamma$.
We suppose, on the contrary, that  $\alpha$ lies in $\Lambda$.
Since $K/H\,=\,K$, let us consider
the skew product $(K\times \mathbb{T}, \{S_t\}_{t\in  \mathbb{R}})$ of $K$
and $ \mathbb{T}$ induced by the cocycle $C(x,t)\,=\, e^{i \alpha t}$.
We use freely the notation above.
Since
$$
\mathcal{F}(x,e^{i\theta})\;=\;
(\overline{\chi_\alpha}\, \mathcal{P}_\alpha)(x,e^{i\theta})
\,,\qquad (x,e^{i\theta}) \in K\times \mathbb{T}\,,
$$
is an invariant function that is not constant,
$d\sigma \times d\theta/2\pi$ is not an ergodic measure on
$(K\times \mathbb{T}, \{S_t\}_{t\in  \mathbb{R}})$.
Now $K$ is represented as the local product
decomposition $K_\alpha\times[\,0,2\pi/\alpha)$,
in which $K_\alpha$ is the closed subgroup of all $x$ in $K$
such that $\chi_\alpha(x)\,=\,1$.
If we put
$$
\mathcal{G}(x,e^{i\theta})\;=\;
h_\alpha(x) \, \mathcal{P}_\alpha(x,e^{i\theta})\,,
\qquad (x,e^{i\theta})\,\in K\times \mathbb{T},
$$
then, for each $x=(y,s)$ in $K_\alpha\times[0,2\pi/\alpha)$,
the equation
\begin{equation}
\mathcal{G}(S_t(x,e^{i\theta}))\,
=\,e^{i(\theta+\alpha t)}h_\alpha(x+e_t)\,
=\,e^{i(\theta-\alpha s)}g(x+e_t)
\label{(8)}
\end{equation}
holds, since $e^{i(\theta+\alpha t)}\overline{\chi_\alpha}(y+e_s+e_t)
\,=\,e^{i(\theta-\alpha s)}$ and $h_\alpha=\overline{\chi_\alpha}g$.
By regarding $ \mathbb{T}$ as the interval $[\,0,2\pi/\alpha)$,
$K\times \mathbb{T}$ is identified with
$K_\alpha\times[\,0,2\pi/\alpha)\times[\,0,2\pi/\alpha)$.
Let $E$ be the subset of $K\times \mathbb{T}$ defined by
$$
E\,=\,K_\alpha\times\{(s,s)\,; 0\le s <2\pi/\alpha\}.
$$
Then $E$ is a closed invariant set in
$(K\times \mathbb{T}, \{S_t\}_{t\in  \mathbb{R}})$, for which
$\fK$ is isomorphic to $(E, \{S_t\}_{t\in  \mathbb{R}})$ via the
map $(y,s) \to (y,s,s)$. We see also that the ergodic measure
$d\sigma$ is carried to $(\alpha/2\pi)d\sigma_1 \times ds $
on $E$ by this map, where $\sigma_1$ is the normalized Haar
measure on $K_\alpha$.
We regard $g_n, \,g $ and $h_\alpha$ as the functions on
$(K\times \mathbb{T}, \{S_t\}_{t\in  \mathbb{R}})$.  Recall
that almost every $G_n^x(t)$ and $G^{x}(t)$ are outer
functions in $H^\infty(dt/\pi(1+t^2))$.

Let $x$ be in $K \setminus N_1$ and let $\{g_k\}$ be a subsequence
 of $\{g_n\}$ such that $G_k^x(t)$
converges pointwise to $t \rightarrow
e^{i\alpha \beta}e^{i\alpha t}h_\alpha(x+e_t)$
with $0\le \beta <2\pi/\alpha$.
Notice that $t \rightarrow e^{i\alpha \beta}e^{i\alpha t}h_\alpha(x+e_t)$ is an outer
function in  $H^\infty(dt/\pi(1+t^2))$ and that
$\vert h_\alpha(x+e_t)\vert\,=\,\vert g(x+e_t)\vert$.
Let $x=(y,s)$ in $K_\alpha\times[\,0,2\pi/\alpha)$ as above.
Since $x$ may be
replaced by any point in the orbit $\mathcal{O}(x)$ of $x$,
we consider $x$ as a function of $s$ on $[\,0,2\pi/\alpha)$.
It follows from
\eqref{(8)} that
$$
e^{i\alpha \beta}e^{i\alpha t }h_\alpha(y+e_s+e_t)
\;=\;
e^{i\alpha(\beta-s)}\mathcal{G}(S_t(y+e_s,e^{i\alpha s}))\,,
\quad (s,t) \in [\,0,2\pi/\alpha)\times  \mathbb{R}.
$$
Putting $t=0$ and replacing $y$ with $y+e_{[s\alpha/2\pi]}$,
if necessary, we observe that
$$
e^{i\alpha (\beta-s)}\mathcal{G}(y+e_s,e^{i\alpha s})
\;=\;e^{i\alpha \beta} e^{-i\alpha s}G^{y}(s)\,,
\qquad s \in  \mathbb{R}.
$$
This shows that  $G_k^y(s)$
converges pointwise to $s \rightarrow
e^{i\alpha \beta}(\overline{\chi_\alpha}\,g)(y+e_s)$, which cannot
be an outer function in  $H^\infty(dt/\pi(1+t^2))$.  Hence
any subsequence of $\{G_n^x(t)\} $ cannot converge to an outer
function in $H^\infty(dt/\pi(1+t^2))$ for $\sigma-a.e. \,x$ in $K$.
Thus we have a contradiction.
\end{proof}

\medskip
In view of Lemma \ref{lem7}, we know that there are two possibilities
in relation to $\alpha$ and $\Lambda$. Either $n\alpha$ lies
in $\Lambda$ only for $n=0$ or $\ell\alpha$ lies in
 $\Lambda$ for an integer  $\ell \ge 2$.
We claim that the latter case cannot occur, meaning that $\alpha$ is
independent to $\Lambda$.

\begin{lem}
Let $\Lambda$, $H$ and $\/\alpha$ be as above. Then $n\alpha$
lies in $\Lambda$ if and only if $n=0$ in $ \mathbb{Z}$.
Consequently, $H$ is isomorphic to $ \mathbb{T}$, so that
$K$ and $\,d\sigma$ are identified with $K/H\times  \mathbb{T}$
and $d\tau\times d\theta/2\pi$, respectively.
\label{lem8}
\end{lem}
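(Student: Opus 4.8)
The plan is to argue by contradiction, exploiting the dichotomy established just above: since Lemma~\ref{lem7} already forbids $\alpha\in\Lambda$, it suffices to rule out the remaining case, in which $\ell\alpha\in\Lambda$ for some integer $\ell\ge 2$. Assuming this, the first key point is that $\phi^\ell$ then lives on the quotient group $K/H$. Indeed, writing $h_\alpha=\overline{\chi_\alpha}\phi$, Lemma~\ref{lem5} shows $h_\alpha$ is generated by a function on $K/H$, while $\chi_{\ell\alpha}=\chi_\alpha^{\,\ell}$ is a function on $K/H$ precisely because $\ell\alpha\in\Lambda$; hence $\phi^\ell=\chi_{\ell\alpha}\,h_\alpha^{\,\ell}$ belongs to $L^\infty(\tau)$. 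By Lemma~\ref{lem2}, $\phi^\ell$ is a bounded generator of $H^2_0(\sigma)$, so Lemma~\ref{lem1} gives $a_0(\phi^\ell)=0$ and that almost every $t\rightarrow\phi^\ell(x+e_t)$ is outer. Because $\phi^\ell$ factors through $\pi:K\to K/H$ and $\pi$ carries $\sigma$ to $\tau$, these conditions say exactly that $\int_{K/H}\phi^\ell\,d\tau=0$ and that $t\rightarrow\phi^\ell(\bar x+e_t)$ is outer for $\tau$-a.e.\ $\bar x$, where now $\{T_t\}$ denotes the flow on $K/H$. Since $K/H$ is the dual of the countable dense subgroup $\Lambda$ of $\mathbb{R}$, $(K/H,\{T_t\})$ is again an almost periodic flow, and so Lemma~\ref{lem1} (its $H^2_0$ statement, read on $K/H$) forces $\phi^\ell$ to be a single generator of $H^2_0(\tau)$ in $L^2(\tau)$.

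With this in hand we are back in the situation of Section~\ref{S3}, with $(K/H,\tau)$ replacing $(K,\sigma)$ and with bounded generator $\phi^\ell$. Let $\Gamma'$ and $\Lambda'$ denote the groups attached to $\phi^\ell$ in that context, i.e.\ $\Gamma'$ is generated by the frequencies of $\phi^\ell$ and $\Lambda'$ by those of $|\phi^\ell|$. On the one hand, $\phi^\ell$ is a function on $K/H=\widehat{\Lambda}$, so all of its frequencies lie in $\Lambda$ and $\Gamma'\subseteq\Lambda$. On the other hand, the spectral-support argument from the beginning of Section~\ref{S3} gives $\Lambda'\subseteq\Gamma'$, and, applied to $|\phi|^\ell$, $|\phi|^{2\ell}$ and $|\phi|$ in turn (the Taylor series of $\log$ on a compact subset of $(0,\infty)$ together with the sumset bound for the spectrum of a product), it gives $\Lambda'=\langle\,\mathrm{supp}\,\widehat{|\phi|^{\ell}}\,\rangle=\langle\,\mathrm{supp}\,\widehat{|\phi|}\,\rangle=\Lambda$. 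Hence $\Lambda=\Lambda'\subseteq\Gamma'\subseteq\Lambda$, so $\Gamma'=\Lambda'$, contradicting Lemma~\ref{lem7} applied on $K/H$. Therefore no such $\ell$ exists, i.e.\ $n\alpha\in\Lambda$ only for $n=0$.

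For the consequence, combine $\Gamma=\Lambda+\mathbb{Z}\alpha$ (Lemma~\ref{lem5}) with $\Lambda\cap\mathbb{Z}\alpha=\{0\}$ (just proved) to get an internal direct sum $\Gamma=\Lambda\oplus\mathbb{Z}\alpha$; dualizing yields $K=\widehat{\Gamma}\cong\widehat{\Lambda}\times\widehat{\mathbb{Z}\alpha}=(K/H)\times\mathbb{T}$, under which $H=\Lambda^{\perp}$ corresponds to $\{0\}\times\mathbb{T}\cong\mathbb{T}$, and since the Haar measure of a product group is the product of the Haar measures, $d\sigma$ is identified with $d\tau\times d\theta/2\pi$. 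The step I expect to be the crux is the very first one --- the realization that, under the assumption $\ell\alpha\in\Lambda$, the power $\phi^\ell$ descends to $K/H$ yet remains an outer single generator of an $H^2_0$-space, so that the problem becomes self-similar and Lemma~\ref{lem7} can be turned against it on the smaller flow. Once that self-similarity is exploited, the identities $\Gamma'\subseteq\Lambda$ and $\Lambda'=\Lambda$ are exactly the Fourier bookkeeping already carried out in Section~\ref{S3}, and the rest is duality.
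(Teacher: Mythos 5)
Your proof is correct and follows essentially the same route as the paper: assume $\ell\alpha\in\Lambda$ for some $\ell\ge 2$, observe that $\phi^\ell=\chi_{\ell\alpha}\,(\overline{\chi_\alpha}\phi)^\ell$ descends to $K/H$ while remaining a bounded generator, deduce that the groups attached to $\phi^\ell$ and $|\phi^\ell|$ coincide with $\Lambda$, and contradict Lemma \ref{lem7}; the dualization of the direct sum $\Gamma=\Lambda\oplus\mathbb{Z}\alpha$ is likewise the paper's argument. You also make explicit a step the paper leaves implicit --- that $\phi^\ell$ is a single generator of $H^2_0(\tau)$ for the quotient flow on $K/H$, so that Lemma \ref{lem7} genuinely applies there --- which is a clarification rather than a deviation.
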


\begin{proof}
\;\; Suppose that  $\ell\alpha$ lies in $\Lambda$
for some  $\ell \ge 2$. By Lemma \ref{lem2},
$\phi^\ell$ is also a bounded generator of $H^2_0(\sigma)$.
It follows from  Lemma \ref{lem6} that $\chi_{\ell\alpha}$ and $(\overline{\chi_\alpha}\phi)^\ell$
lie in $L^2(\tau)$, so does $\phi^\ell$ itself. Let  $\Gamma_\ell$
and  $\Lambda_\ell$ be the smallest groups determined by the nonzero
Fourier coefficients of $\phi^\ell$ and $\vert\phi^\ell\vert$ as above.
Then they both are subgroups of $\Lambda$.
On the other hand, since
$$
a_\lambda(\vert\phi\vert)\;=\;\lim_{\epsilon\to +0}\;
a_\lambda\left((\vert\phi\vert^\ell+\epsilon)^{1/\ell}\right)\,,
$$
each $\lambda$ in $\Lambda$ with
$a_\lambda(\vert\phi\vert)\,\not=\, 0$ lies in $\Lambda_\ell$.
This implies that $\Lambda\,=\,\Lambda_\ell \,=\,\Gamma_\ell$.
By replacing $\phi$ with $\phi^\ell$ in Lemma \ref{lem7}, this gives
a contradiction.
Thus $n\alpha$ lies in $\Lambda$ if and only if $n=0$.

Since $C(\bar x,t)^n$ is a coboundary only
for $n=0$, the measure $d\tau \times d\theta/2\pi$ is ergodic on
$(K/H\times \mathbb{T}, \{S_t\}_{t\in  \mathbb{R}})$. Define the
isomorphism of $\Lambda\times  \mathbb{Z}$ onto $\Gamma$ by
$$
\varrho(\lambda, n) \;=\; \lambda+n\alpha\,,\quad\quad
(\lambda, n)\in \Lambda\times  \mathbb{Z}.
$$
Then the conjugate map $\varrho^*$ of $\varrho$ is given by
$\varrho^*(x)=
(\bar x,e^{i\theta})$ on $K$, where $\chi_\alpha(x)\,=\,e^{i\theta}$.
Indeed, we observe that
$$
\chi_\lambda(\bar x) e^{in\theta}\;=\;\langle (\lambda, n),(\bar x,e^{i\theta})\rangle
\;=\;\chi_{\lambda+n\alpha}(x)\;=\;\chi_\lambda(\bar x)\chi_\alpha(x)^n\,,
$$
for each $(\lambda, n)$ in $\Lambda\times  \mathbb{Z}$.
Via the map $\varrho^*$\,, $K$ is identified with
$K/H\times \mathbb{T}$, and $d\tau \times d\theta/2\pi$ is
carried by the map to $d\sigma$ on $K$.
\end{proof}

We notice that the annihilator $H$ of $\Lambda$ is isomorphic to
 $ \mathbb{T}$, and $\vert g(x)\vert$ as well as $\vert \phi(x)\vert$
is constant on almost every coset $\bar{x}=x+H$ in $K/H$.

\bigskip
\section{Contradiction to existence}

We may now offer our proof of the main result stated in Section 1.

\medskip
\noindent
{\itshape Proof of the Theorem.\;}
Suppose, on the contrary, that a bounded function $\phi$
generates $H^2_0(\sigma)$. Let $\Gamma$ and $\Lambda$
be the dense subgroups of $ \mathbb{R}$ defined as in Section 3
with respect to $\phi$ and $\vert\phi \vert$, respectively.
Choose an $\alpha$ in $\Gamma$ with $a_\alpha(\phi) \not= 0$.
It follows from Lemma \ref{lem8}
that $\alpha$ is independent of $\Lambda$ and $\Gamma$ is
generated by $\alpha$ and $\Lambda$.
Let $0<\beta <1$.  Since the function
$$
(1+\beta\chi_\alpha)^{-1}\;=\; \sum_{k=0}^{\infty}\,
(-\beta)^k\chi_{k\alpha}
$$
lies in $H^\infty(\sigma)$, $(1+\beta\chi_\alpha)^2$ is an
outer function in $H^\infty(\sigma)$. Define
$\phi_1\,=\,(1+\beta\chi_\alpha)^2\,\phi$\,.
In view of Lemma \ref{lem1},  $\phi_1$ is also a bounded
generator of $H^2_0(\sigma)$. As above, let $\Gamma_1$ and
$\Lambda_1$ be the smallest groups determined by the nonzero
Fourier coefficients of $\phi_1$ and $\vert\phi_1\vert$, respectively.
Notice that $\Gamma_1$ is a subgroup of $\Gamma$.
We claim that the generator $\phi_1$ cannot satisfy the
property of Lemma \ref{lem7}.
Indeed, since
$
\vert \phi_1 \vert\,=\,(1+\beta^2+\beta\overline{\chi_\alpha}
+\beta\chi_\alpha)\, \vert \phi \vert\,,
$
we obtain by \eqref{(1)} that
$$
a_\lambda(\vert \phi_1 \vert)\;=\;(1+\beta^2)a_\lambda(\vert \phi \vert)
+\beta a_{\lambda+\alpha}(\vert \phi \vert)+ \beta
a_{\lambda-\alpha}(\vert \phi \vert).
$$
Since $\alpha$ does not lie in $\Lambda$, if $\lambda$ is in
$\Lambda$, then $a_{\lambda+\alpha}(\vert \phi \vert)\,=\,
a_{\lambda-\alpha}(\vert \phi \vert)\,=0$. Then we have
$$
a_\lambda(\vert \phi_1 \vert)\,=\,
(1+\beta^2)a_\lambda(\vert \phi \vert) \quad \text{and} \quad
a_{\lambda+\alpha}(\vert \phi_1 \vert)\,=\,
\beta a_\lambda(\vert \phi \vert),
$$
for each $\lambda$ in $\Lambda$.
These facts imply that $\Lambda_1$ contains $\Lambda$ and
$\alpha$, so that $\Gamma=\Lambda_1=\Gamma_1$, which contradicts
Lemma \ref{lem7}.
\hfill $\square$

\medskip
The next proof is of independent interest, because it suggests
that our Theorem is regarded essentially as the converse
to Corollary \ref{cor1}.

\medskip
\noindent
{\itshape Proof of Corollary \ref{cor1}.\,}
We consider the case where the cocycle $C(x,t)$
of $\,\mathfrak{M}$ has the form $C(x,t)\,=\,e^{i\alpha t}$.
Then $\mathfrak{M}_-$ is the space of all $\psi$ in
$L^2(\sigma)$ satisfying that
$$
\psi(x)\;\sim\,\sum_{\Gamma \ni \lambda \,>\, -\alpha}\;
a_\lambda(\psi)\chi_\lambda(x)\,.
$$
Suppose that $\mathfrak{M}_-$ has a generator $\phi$.
Then $\log\vert\phi\vert$ does not lie in $L^1(\sigma)$
and we may assume that $\phi$ is bounded.
If $\ell \alpha$ is in $\Gamma$ for a positive integer $\ell$,
then the bounded function $(\chi_\alpha \phi)^\ell$ is a
single generator of $H^2_0(\sigma)$ by Lemma \ref{lem1},
which is contrary to Theorem. We next consider the case that
$$
\alpha \in  \mathbb{R}\setminus \bigcup^\infty_{n=1}\,(1/n)\Gamma\,.
$$
Since ${C(x,t)}^n$ is a coboundary only for $n=0$, the measure
$d\sigma \times d\theta/2\pi$ is ergodic on the skew product
$(K\times \mathbb{T}, \{S_t\}_{t\in  \mathbb{R}})$ induced by
$C(y,t)$, that is,
$$
S_t(x,e^{i\theta})\;=\;(x+e_t\,, \,e^{i\alpha t}e^{i\theta}),
\qquad (x,e^{i\theta})\in K\times \mathbb{T}.
$$
Let $\Gamma_1$ be the discrete group generated by
$\Gamma$ and $\alpha$, and let $K_1$ be the dual group of $\Gamma_1$.
Since $\varrho(\lambda, n)\,=\,\lambda + \alpha n$ is an isomorphism
of $\Gamma\times \mathbb{Z}$ onto $\Gamma_1$, the almost periodic
flow on $K_1$ is identified with
$(K\times \mathbb{T}, \{S_t\}_{t\in  \mathbb{R}})$. Then, via the dual
map $\varrho^\ast$ of $\varrho$, the normalized Haar measure
$d\mu$ on $K_1$ is identified with $d\sigma \times d\theta/2\pi$.
Define the function $\phi_1$ in $L^2(\mu)$ by
$\phi_1(x,e^{i\theta})\,=\,\phi(x)e^{i\theta}$.
Since $\log\vert\phi\vert$ does not lie in $L^1(\sigma)$,
neither does $\log\vert\phi_1\vert$ in $L^1(\mu)$. Since
$t \rightarrow \phi_1\circ S_t(x,e^{i\theta})$
is outer in $H^2(dt/\pi(1+t^2))$ for $\mu - a.e.\,
(x,e^{i\theta})$ in $K\times \mathbb{T}$,
Lemma \ref{lem1} implies that
$\phi_1$ is a single generator of
$H^2_0(\mu)$, which contradicts our Theorem.
\hfill $\square$

\medskip
\noindent
{\itshape Proof of Corollary \ref{cor2}.\;}
Denote by $C(x,t)$ the real cocycle of $\mathfrak{M}$.
Suppose that $\mathfrak{M}_-$ has a generator $\phi$, for which
$\log \vert\phi\vert$ does not lie in $L^1(\sigma)$.
It follows from Lemma \ref{lem1} that almost every
$t \rightarrow C(x,t)\phi(x+e_t)$ is outer in $H^2(dt/\pi(1+t^2))$.
We may assume that $\phi$ is bounded. Since
$C(x,t)^2\,\equiv \,1$, $\phi^2$
is a single generator of $H^2_0(\sigma)$ by Lemma \ref{lem1},
which contradicts our Theorem.
\hfill $\square$

\medskip
By the same way as above, we may show that if $C(x,t)$ takes only
finite values, then $\mathfrak{M}_-$ cannot be singly generated.
Indeed, by the cocycle identity,
the set of values of $C(x,t)$ forms a group of order $k$,
$$
\mathcal{Z}(2\pi/k)\;=\;\left\{e^{i2\pi j/k}\;;\;j= 0,\ldots, k-1 \right\}\,.
$$
Then if $\phi$ generates $\mathfrak{M}_-$, then $\phi^k$ is
a generator of $H^2_0(\sigma)$.

\medskip
Let $\mathfrak{M}$ be the normalized simply invariant subspace of
$L^2(\sigma)$ with cocycle $A(x,t)$. Recall that $\psi$ lies in
$\mathfrak{M}$ if and only if almost every $t \rightarrow
A(x,t)\psi(x+e_t)$ lies in $H^2(dt/\pi(1+t^2))$. Denote by
$\widetilde{\mathfrak{M}}$ the invariant subspace with cocycle
$\overline{A(x,t)}$ (as discussed in \cite[\S 3.2]{H1}).
To prove Corollary \ref{cor3}. we need the following:

\begin{lem}
Let $\/\mathfrak{M}$ and $\/\widetilde{\mathfrak{M}}$ be as above.
If $\/\mathfrak{M}$ is singly generated, then
$(\widetilde{\mathfrak{M}})_-$ cannot be singly generated.
\label{lem9}
\end{lem}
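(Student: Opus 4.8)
The plan is to argue by contradiction: supposing that $\mathfrak{M}$ is singly generated and that $(\widetilde{\mathfrak{M}})_-$ is singly generated, I would build from the two generators a single generator of $H^2_0(\sigma)$, which is impossible by the Theorem. First I would dispose of the case in which the cocycle $A(x,t)$ of $\mathfrak{M}$ is a coboundary, say $A(x,t)=q(x)\overline{q(x+e_t)}$ with $q$ unitary on $K$. Then $\overline{A(x,t)}=\overline{q(x)}\,q(x+e_t)$ is the coboundary attached to $\overline{q}$, so $(\widetilde{\mathfrak{M}})_-=\overline{q}\,H^2_0(\sigma)$. Since multiplication by the unitary $q$ is an isometry of $L^2(\sigma)$ commuting with multiplication by every $\chi_\lambda$, it carries the invariant subspace generated by $\psi$ onto the one generated by $q\psi$; hence a generator $\psi$ of $\overline{q}\,H^2_0(\sigma)$ would give the generator $q\psi$ of $H^2_0(\sigma)$, contradicting the Theorem. (In this case the hypothesis on $\mathfrak{M}$ is not even needed.)

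From now on $A(x,t)$, and hence $\overline{A(x,t)}$, is not a coboundary, so $\mathfrak{M}_+=\mathfrak{M}_-=\mathfrak{M}$ and likewise $(\widetilde{\mathfrak{M}})_+=(\widetilde{\mathfrak{M}})_-=\widetilde{\mathfrak{M}}$. Let $\phi$ generate $\mathfrak{M}$ (hence $\mathfrak{M}_-$) and let $\psi$ generate $(\widetilde{\mathfrak{M}})_-$. As in the proof of Lemma \ref{lem2}(a), multiplying $\phi$ by the bounded outer function $h$ with $\vert h\vert=\min(1,\vert\phi\vert^{-1})$ — which is outer because $\log^+\vert\phi\vert\in L^1(\sigma)$ — and doing the same for $\psi$, I may assume by Lemma \ref{lem1} that $\phi$ and $\psi$ are bounded generators of $\mathfrak{M}_-$ and $(\widetilde{\mathfrak{M}})_-$, respectively. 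Lemma \ref{lem1} then gives $\log\vert\phi\vert,\ \log\vert\psi\vert\notin L^1(\sigma)$, and for $\sigma$-a.e.\ $x$ the functions $t\mapsto A(x,t)\phi(x+e_t)$ and $t\mapsto\overline{A(x,t)}\psi(x+e_t)$ are outer in $H^\infty(dt/\pi(1+t^2))$.

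Now set $F=\phi\psi\in L^\infty(\sigma)$. For $\sigma$-a.e.\ $x$,
$$
F(x+e_t)\;=\;[\,A(x,t)\phi(x+e_t)\,]\cdot[\,\overline{A(x,t)}\psi(x+e_t)\,]
$$
is a product of two outer functions in $H^\infty(dt/\pi(1+t^2))$, hence is itself outer there; so $F\in H^\infty(\sigma)\subseteq H^2(\sigma)$ and almost every $t\mapsto F(x+e_t)$ is outer. Moreover $\log\vert F\vert=\log\vert\phi\vert+\log\vert\psi\vert$ is bounded above, and it is not in $L^1(\sigma)$ because the integrals of the two (bounded above) summands each diverge to $-\infty$. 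Since the cocycle of $H^2(\sigma)$ equals $1$, the last part of Lemma \ref{lem1} now shows that $F$ is a single generator of $H^2_0(\sigma)$, contradicting the Theorem. This completes the argument.

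The step I expect to need the most care is the passage to bounded generators in the non-coboundary case — namely verifying that replacing a generator $\phi$ of $\mathfrak{M}$ by $\phi h$ with $h$ a bounded outer function leaves the generated invariant subspace unchanged — together with the bookkeeping that the dichotomy ``$A$ is a coboundary'' versus ``$\mathfrak{M}_+=\mathfrak{M}_-$'' is compatible with the passage $A\mapsto\overline{A}$. Everything else is a direct application of Lemma \ref{lem1} and of the fact that a product of outer functions is outer.
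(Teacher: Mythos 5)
Your proof is correct and follows essentially the same route as the paper: both arguments multiply a bounded generator of $\mathfrak{M}$ (or $\mathfrak{M}_-$) by a bounded generator of $(\widetilde{\mathfrak{M}})_-$ to manufacture a single generator of $H^2_0(\sigma)$, contradicting the Theorem. The only differences are cosmetic: the paper concludes that $\phi_1\phi_2$ generates $H^2_0(\sigma)$ by citing Helson's result that $H^2_0(\sigma)$ is the smallest invariant subspace containing all such products, whereas you verify this directly from Lemma \ref{lem1} (cocycles cancel, outer times outer is outer, and $\log\vert\phi\psi\vert\notin L^1(\sigma)$), and you split off the coboundary case, which the paper's phrasing absorbs without comment.
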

\begin{proof}
Since $A(x,t)\cdot\overline{A(x,t)}\equiv 1$, $H^2_0(\sigma)$ is
the smallest subspace of $L^2(\sigma)$ containing all
$\psi_1\psi_2$ with $\psi_1$ in
$\mathfrak{M}\cap L^\infty(\sigma)$ and $\psi_2$ in
$(\widetilde{\mathfrak{M}})_-\cap L^\infty(\sigma)$ (see
\cite[\S 3.2, Theorem 20]{H1}). Suppose that $(\widetilde{\mathfrak{M}})_-$
is singly generated. Then Lemma \ref{lem1} shows that there are bounded
single generators $\phi_1$ and $\phi_2$ of $\mathfrak{M}$ and
$(\widetilde{\mathfrak{M}})_-$, respectively. Thus $\phi_1 \phi_2$
is a single generator of $H^2_0(\sigma)$, which contradicts our Theorem.
\end{proof}

\medskip
\noindent
{\itshape Proof of Corollary \ref{cor3}.\/}
(a)\; Let $\mathfrak{M}$ be a simply invariant subspace with
nontrivial cocycle $A(x,t)$. It follows from \cite{HT} that
$\mathfrak{M}$ is singly generated if and only if $A(x,t)$ is
cohomologous to a singular cocycle. On the other hand, by
\cite[\S 4.6, Theorem 26]{H1}, every cocycle is cohomologous to a
Blaschke cocycle. By virtue of Lemma \ref{lem9}, we obtain
easily a desired Blaschke cocycle.

\medskip
(b)\; From Lemma \ref{lem9},
we choose a Blaschke cocycle $B(x,t)$ such that
the invariant subspace $\mathfrak{N}$ having the
cocycle $\overline{B(x,t)}$ is not singly generated.
We claim that $B(x,t)$ satisfies the desired property.
Suppose, on the contrary, that some function $\psi$ in
$H^2(\sigma)$ has exactly the same zeros as
$B(x,t)$. By multiplying by a suitable outer function, we
assume that $\psi$ is bounded.
Then $\psi$ generates the invariant subspace with
cocycle  $\overline{B(x,t)} \overline{S(x,t)}$,
where $S(x,t)$ is the singular
cocycle determined by the inner part of
$t\rightarrow \overline{B(x,t)}\psi(x+e_t)$ in
$H^2(dt/\pi(1+t^2))$.
On the other hand, it follows from \cite{HT} and
Lemma \ref{lem1} that there is a function $h$ in
 $L^2(\sigma)$ such that almost every
$t\rightarrow S(x,t)h(x+e_t)$ is outer in
$H^2(dt/\pi(1+t^2))$. Observe that
$$
(h\psi)(x+e_t)\;=\; B(x,t)\cdot S(x,t)h(x+e_t)\cdot
\overline{B(x,t)} \overline{S(x,t)} \psi(x+e_t)\,.
$$
Since the inner part of
$t\rightarrow (h\psi)(x+e_t)$ is $t\rightarrow B(x,t)$, the subspace
$\mathfrak{N}$ is singly generated by $h\psi$, thus we have a contradiction.
\hfill $\square$

\medskip
In the proof of (b) above, if the singular cocycle $S(x,t)$ is a
coboundary, then $h$ is taken as a unitary function, otherwise
$\log \vert h\vert$ does not lie in $L^1(\sigma)$.

\bigskip
\section{Remarks}
(a)\quad It is sometimes useful to study the spectral measures
associated with invariant subspaces.  Let $\mathfrak{M}$ be a simply
invariant subspace of $L^2(\sigma)$ and put
$$
\mathfrak{M}_{\lambda}\;=\;
\bigwedge_{\lambda\ge\nu}\;\chi_\nu\,\mathfrak{M}.
$$
for each $\lambda$ in $ \mathbb{R}$.
Denote by $P_{\lambda}$ the orthogonal projection of $L^2(\sigma)$
onto $\mathfrak{M}_{\lambda}$.
By the property that
$$
\displaystyle \bigwedge_{-\infty<\lambda<\infty}\mathfrak{M}_\lambda
\;=\;\{0\} \qquad\text{and}\qquad \displaystyle
\bigvee_{-\infty<\lambda<\infty}\mathfrak{M}_\lambda
\;=\;L^2(\sigma)\,,
$$
we obtain the contuinity of the spectral resolution of identity
$\{I-P_{\lambda}\}_{\lambda\in  \mathbb{R}}$ on $L^2(\sigma)$, where
$I$ is the identity map on $L^2(\sigma)$. Let $A(x,t)$ be the
cocycle of $\mathfrak{M}$. By Stone's theorem, a unitary group
$\{V_t\}_{t\in {\bf R}}$ on $L^2(\sigma)$ is defined as
$$
V_t\,\phi(x)\;=\;A(x,t)T_t\,\phi(x)\;=\;-\int_{-\infty}^{\infty}\,
e^{i\lambda t}\,dP_{\lambda}\phi(x)\,,\qquad \phi \in L^2(\sigma)\,,
$$
where $T_t\,\phi(x)= \phi(x+e_t)$. For a nonzero function $\phi$ in
$L^2(\sigma)$, $-d(P_{\lambda}\phi, \phi)$ is a finite positive
measure on $ \mathbb{R}$. On almost periodic flows, by comparing with
Lebesgue measure $d\lambda$, the type of such measures is uniquely
determined. We then say that each of $\mathfrak{M}, A(x,t)$
and $\{V_t\}_{t\in  \mathbb{R}}$ is of \textit{absolutely
continuous}, or  \textit{singular continuous}, or \textit{discrete}
type (as discussed in \cite[\S 2.4]{H1}). This fact plays an important
role to classify invariant subspaces in this special context.
It is easy to observe that $A(x,t)$ and ${\overline{A(x,t)}}$
have the same spectral type, so the following is an immediate
consequence of Lemma \ref{lem9}.

\begin{prop}
\label{prop2}
There is a simply invariant subspace of $L^2(\sigma)$ of either
absolutely continuous or singular continuous type which has no single
generator.
\end{prop}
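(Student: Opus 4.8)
The statement is essentially a corollary of Lemma~\ref{lem9}, once one feeds in a cocycle of the right spectral type and recalls that conjugation $A\mapsto\overline A$ preserves spectral type. First I would invoke the known existence of a cocycle $A(x,t)$ on $\fK$ that is of absolutely continuous type (the singular continuous case being handled identically): for instance a nontrivial real cocycle with Lebesgue spectrum, as constructed in \cite{HP}, is of this type; compare also the discussion of spectral types in \cite[\S 2.4]{H1}. In particular such an $A(x,t)$ is not of discrete type. Let $\mathfrak{M}$ be the simply invariant subspace of $L^2(\sigma)$ with cocycle $A(x,t)$; by construction $\mathfrak{M}$ is of absolutely continuous type.

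Now split into two cases. If $\mathfrak{M}$ has no single generator, then $\mathfrak{M}$ is already the subspace we want and there is nothing more to prove. Otherwise $\mathfrak{M}$ is singly generated, and Lemma~\ref{lem9} yields that $(\widetilde{\mathfrak{M}})_-$ has no single generator, where $\widetilde{\mathfrak{M}}$ is the normalized simply invariant subspace with cocycle $\overline{A(x,t)}$. By the remark preceding the proposition, $\overline{A(x,t)}$ has the same spectral type as $A(x,t)$, so $\widetilde{\mathfrak{M}}$ is again of absolutely continuous type; and since the normalization of $(\widetilde{\mathfrak{M}})_-$ is $\widetilde{\mathfrak{M}}$ itself — equivalently, the spectral resolution $\{I-P_\lambda\}$ is continuous and $(\widetilde{\mathfrak{M}})_-$ differs from $(\widetilde{\mathfrak{M}})_+=\widetilde{\mathfrak{M}}$ by at most one dimension — the subspace $(\widetilde{\mathfrak{M}})_-$ is of absolutely continuous type as well. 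In either case we have produced a simply invariant subspace of $L^2(\sigma)$ of the prescribed continuous type with no single generator.

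The only steps requiring genuine input are: (i) producing the seed cocycle that is \emph{not} of discrete type, which is where one actually uses the spectral theory of cocycles (\cite{HP} for the absolutely continuous case, and the analogous constructions for the singular continuous case, cf.\ \cite[\S 2.4]{H1}); and (ii) the bookkeeping that neither $A\mapsto\overline A$ nor $\mathfrak{M}\mapsto\mathfrak{M}_-$ alters the spectral type, which is immediate from the definition of spectral type through the measures $-d(P_\lambda\phi,\phi)$ together with the continuity of the spectral resolution valid for every simply invariant subspace. I do not foresee any real obstacle beyond pinning down a convenient reference for (i).
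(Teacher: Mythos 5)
Your proposal is correct and follows essentially the same route as the paper, which derives Proposition~\ref{prop2} as an immediate consequence of Lemma~\ref{lem9} together with the observation that $A(x,t)$ and $\overline{A(x,t)}$ have the same spectral type (a non-discrete, i.e.\ nontrivial, seed cocycle being supplied by \cite{HP}). Your extra bookkeeping about $(\widetilde{\mathfrak{M}})_-$ versus $\widetilde{\mathfrak{M}}$ and the two-case split merely makes explicit what the paper leaves implicit.
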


Let $w$ be a nonnegative function in $L^2(\sigma)$ satisfying \eqref{(3)},
while $\log w$ does not lie in $L^1(\sigma)$. We know that a
cocycle is trivial if and only if it is of discrete type (see
\cite[\S 2.4, Theorem 15 ]{H1}). It follows from Corollary \ref{cor1} that
the type of $\mathfrak{M}[w]$ has to be continuous. However, we have no
idea to decide what kind of continuous spectrum $\mathfrak{M}[w]$ may have.

\medskip
(b)\quad Using a suitable cocycle, we may construct a skew product
on which the $H^2_0-$space is singly generated. Indeed, let $w$ be a bounded
function as above and let $A(x,t)$ be the cocycle of $\mathfrak{M}[w]$.
By Lemma \ref{lem1} we see that almost every
$t \rightarrow A(x,t) w(x+e_t)$ is outer in $H^2(dt/\pi(1+t^2))$.
Denote by $\fKT$ the skew product induced by $A(x,t)$. If $A(x,t)^n, n\ge 1,$
is a coboundary $\overline{q(x)}q(x+e_t)$ with unitary function $q$ on $K$,
then $qw^n$ is a single generator of $H^2_0(\sigma)$. It then
follows from Theorem that $A(x,t)^n$ is a coboundary only for
$n=0$. Hence $d\mu \,=\,d\sigma \times d\theta/2\pi$ is an
ergodic measure on $\fKT$. If we set
$$
\phi(x,e^{i\theta})\;=\;w(x)e^{i\theta}\,,
\qquad (x,e^{i\theta}) \in K\times \mathbb{T}\,,
$$
then $\phi$ is a single generator of $H^2_0(\mu)$, since $\log |\phi|$
does not lie in $L^1(\mu)$ and almost every
$t \rightarrow \phi(S_t(x,e^{i\theta}))$
is outer in $H^2(dt/\pi(1+t^2))$
(see \cite{T2} for another construction).

\medskip
(c)\quad We have a bit of information on the distribution
of zeros of  functions in $H^2(\sigma)$ which are
connected with Dirichlet series (refer to \cite{T3} for
related topics). Let $\{\lambda_n\}$
be a sequence in $\Gamma$ such that
$$
0\,\le\,\lambda_1\,<\,\lambda_2\,<\cdots <\lambda_n
\longrightarrow \lambda\,, \qquad n\to\infty\,,
$$
for some $\lambda$ in $\Gamma$. Define a function $\psi$ in $H^2(\sigma)$
by
$$
\psi\;=\;\sum_{n=1}^{\infty}\, a_n \chi_{\lambda_n }
$$
with $\sum_{n=1}^{\infty}\, \vert a_n\vert^2\,<\,\infty$. Observe that
almost every $t \rightarrow \psi(x+e_t)$ extends to an entire function.

\begin{prop}
\label{prop3}
Let $\psi$ be as above and let $\/ \delta >0$. Then there is a decreasing sequence
$\{m_n\}$ with $m_n\rightarrow -\infty$ such that
the number of zeros of $z \rightarrow \psi(x+e_z)$ in the strip
$$
S_n\;=\;\left\{\,z=t+iu\;;\; m_n> u >m_n-\delta \right\}
$$
is infinite, for $\sigma-a.e. \,x$ in $K$.
\end{prop}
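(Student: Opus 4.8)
The plan is to translate the statement into a question about the entire functions $F^x(z) := \psi^\sharp(y,z)$ (equivalently $z\mapsto \psi(x+e_z)$, with $x = (y,s)$), and to exploit the fact that the partial sums of the Dirichlet series converge to $\psi$ while the ``tail exponents'' $\lambda_n \to \lambda$ force a definite amount of oscillation arbitrarily deep in the lower half-plane. First I would record the growth/decay of $F^x$ in half-planes: since $a_\lambda(\psi)=0$ for $\lambda<0$, the function $z\mapsto \psi(x+e_z)$ is bounded in the closed upper half-plane; and since $\psi = \sum a_n\chi_{\lambda_n}$ with exponents bounded above by $\lambda$, the function $e^{-i\lambda z}\psi(x+e_z) = \sum a_n e^{i(\lambda_n-\lambda)z}$ is, after applying a Poisson-type smoothing as in Lemma~\ref{lem4}, bounded in the closed \emph{lower} half-plane for $\sigma$-a.e.\ $x$. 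Thus on a.e.\ orbit $F^x$ is an entire function of exponential type which behaves like a bounded function in $\mathcal H$ and like $e^{i\lambda z}$ (modulus $e^{\lambda u}\to 0$ as $u\to-\infty$) deep in the lower half-plane.

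The heart of the argument is a counting estimate. Fix $x$ in the full-measure set where the extension in Lemma~\ref{lem4} holds and where $F^x\not\equiv 0$. For a horizontal strip $\{m-\delta < u < m\}$ I would bound the number of zeros of $F^x$ below using the argument principle together with a Jensen-type inequality: the zero count in a rectangle $R_{m,\delta,T} = \{|t|<T,\ m-\delta<u<m\}$ is, up to boundary contributions, $\frac{1}{2\pi}\oint_{\partial R}d\arg F^x$. As $T\to\infty$ the top side (at height $u=m$, still in $\mathcal H$) contributes boundedly because $F^x$ is bounded and its mean modulus over long horizontal segments is bounded below away from $0$ (outer-type behaviour as used throughout the paper), while the bottom side (at height $u = m-\delta$) carries the phase of $e^{i\lambda z}$ over a length-$2T$ interval, i.e.\ roughly $\lambda T/\pi$ worth of argument increase; the vertical sides contribute $O(1)$ per unit of $T$ only through $\log|F^x|$, which is controlled by the two-sided bounds above. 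Hence the zero count in the strip grows at least linearly in $T$, so it is infinite. Choosing $m = m_n\to-\infty$ along any sequence then gives infinitely many zeros in each $S_n$; taking the countable intersection of the relevant full-measure sets (one per strip, plus the Lemma~\ref{lem4} null set) preserves $\sigma$-a.e.\ $x$.

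The main obstacle I anticipate is making the lower-half-plane bound genuinely uniform in $x$ on a set of full measure and controlling the \emph{cancellation} in the boundary integral on the top edge: one needs that $\int_{-T}^{T}\log|F^x(t+im)|\,dt$ is not too negative (so that the argument-principle bookkeeping does not lose the linear term), which is where the outer property of almost every $t\mapsto\psi(x+e_t)$ and the local product/normal-family machinery of Section~2 (Proposition~\ref{prop1} and Lemma~\ref{lem4}) must be invoked carefully; a clean way to sidestep delicate boundary estimates is to integrate $\log|F^x|$ against a Poisson kernel for the strip rather than over its boundary, reducing everything to the already-established $H^\infty$ extension on a large compact set $F$ with $\sigma(F) > 1-\epsilon$ whose orbit intersection is dense, and then let $\epsilon\to 0$.
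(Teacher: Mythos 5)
Your plan replaces the paper's argument with a direct Jessen--Tornehave-type zero count, and the key quantitative step does not hold. In your rectangle $R_{m,\delta,T}$ both horizontal edges lie deep in the lower half-plane (you write that the top edge at height $u=m$ is ``still in $\mathcal{H}$'', but $m=m_n\to-\infty$). The argument-principle count in such a thin strip is governed by the \emph{difference} of the mean motions of $t\mapsto\psi(x+e_{t+iu})$ on the two horizontal edges, not by the bottom edge alone: if the same mean motion prevails on both edges, the two contributions (traversed in opposite directions) cancel and the strip is essentially zero-free. Your bookkeeping --- bottom edge contributes $\lambda T/\pi$, top edge $O(1)$ --- would prove that \emph{every} sufficiently deep strip of width $\delta$ contains infinitely many zeros, which is false: between the depths at which successive exponents $\lambda_n$ become dominant, a single term of the series can dominate and produce zero-free horizontal strips. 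The proposition only asserts the existence of \emph{some} sequence $\{m_n\}$, and making that choice is exactly what your argument does not supply. Moreover, the premise that $\psi(x+e_z)$ carries argument at rate $\lambda$ on deep horizontal lines is unjustified: $\lambda$ is only the \emph{limit} of the exponents, no term of the series has exponent $\lambda$, so the mean motion at any finite depth is strictly smaller than $\lambda$; and $\sum|a_n|^2<\infty$ does not even yield the asserted boundedness of $\psi(x+e_z)$ in the closed upper half-plane or of $e^{-i\lambda z}\psi(x+e_z)$ in the lower one.

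You also never invoke the paper's Theorem, which is the actual engine of its proof. The paper sets $\phi=\chi_{\lambda}\overline{\psi}=\sum\overline{a_n}\,\chi_{\lambda-\lambda_n}$, an element of $H^2_0(\sigma)$ whose entire extension vanishes at $\bar z$ exactly when that of $\psi$ vanishes at $z$. If almost every $t\mapsto(\phi\ast P_{ir})(x+e_t)$ were outer, then since $a_0(\phi\ast P_{ir})=0$, Lemma \ref{lem1} would make $\phi\ast P_{ir}$ a single generator of $H^2_0(\sigma)$, contradicting the Theorem; and because the exponents $\lambda-\lambda_n$ accumulate at $0$ there is no weight at infinity, so the nontrivial inner part is a nonconstant Blaschke cocycle. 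Hence for every $r>0$ the entire extension of almost every orbit function of $\phi$ has infinitely many zeros above height $r$, and the strips $S_n$ are then chosen among these unboundedly deep zeros of $\psi$. If you insist on a self-contained complex-analytic route, you would need the full mean-motion machinery to show that $c(u)\to\lambda$ as $u\to-\infty$ without being eventually constant --- a substantially harder path than the paper's two-line reduction to its main Theorem.
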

\begin{proof}
Putting $\nu_n=\lambda-\lambda_n$, we let
$\phi\;=\;\sum_{n=1}^{\infty}\, \overline{a_n}\, \chi_{\nu_n }$.
Since $z \rightarrow e^{i\lambda z}$ has no zero,
$z \rightarrow \psi(x+e_z)$ has zero at $z$
if and only if so does $z \rightarrow \phi(x+e_z)$ at
$\bar{z}$. For each $r\,>\,0$,
$t \rightarrow \phi \ast P_{ir}(x+e_t)$ cannot be an outer function in $H^2(dt/\pi(1+t^2))$,
even if \/$ \log\vert\phi\,\vert$ does not lie in $L^1(\sigma)$.
Since $\phi$ has no weight at infinity, the inner part of
$t \rightarrow \phi \ast P_{ir}(x+e_t)$
derives a Blaschke cocycle being not constant.
From this fact, we may choose easily a desired decreasing sequence $\{m_n\}$.
\end{proof}

\medskip
[Note on September 9, 2014: Made some changes in wording]

[Note on April 13, 2015: Accepted in ANNALES DE L'INSTITUTE FOURIER.]

\end{document}